\documentclass[12pt]{article}
\usepackage{latexsym,amsfonts,amssymb}
\usepackage{color}
\usepackage[toc,page]{appendix}
\usepackage{amsmath}
\usepackage{graphicx}
\usepackage{pdflscape}
\usepackage{rotating}
\usepackage{breqn}
\newtheorem{lemma}{Lemma}[section]
\allowdisplaybreaks

\newtheorem{thm}{Theorem}[section]

\newtheorem{rem}[thm]{Remark}

\date{}
\begin{document}

\title{\bf The infimum values of three probability functions for the Laplace distribution and the student's $t$ distribution}
\author{Rong-Sheng Hu, Ze-Chun Hu\thanks{Corresponding author: zchu@scu.edu.cn}, Zhen Huang, Mu-Xuan Li\\ \\
   {\small College of Mathematics, Sichuan University, Chengdu 610065, China}}

\maketitle

\begin{abstract}
\noindent
Let $\{X_\alpha\}$ be a family of random variables satisfying some distribution with a parameter $\alpha$, $E(X_{\alpha})$ be the expectation, and $Var(X_{\alpha})$ be the variance. In this paper, we study the infimum values of three probability functions:
$P(X_{\alpha}\leq y E(X_{\alpha}))$, $P\left(|X_{\alpha}-E(X_{\alpha})|\leq y\sqrt{Var(X_{\alpha})}\right)$ and $P\left(|X_{\alpha}-E(X_{\alpha})|\geq y\sqrt{Var(X_{\alpha})}\right), \forall y>0$, with respect to the parameter $\alpha$ for the Laplace distribution and the student's $t$ distribution. Our motivation comes from three former conjectures: Chv\'{a}tal's conjecture, Tomaszewski's conjecture and Hitczenko-Kwapie\'{n}'s conjecture.
\end{abstract}

\noindent  {\it Keywords:} Laplace distribution, student's $ t $ distribution, concentration of measure, anti-concentration of measure, hypergeometric function

\noindent  {\it MSC (2020):} 60E15

\section{Introduction and main results}

In this paper, we  study the infimum values of three probability functions for the Laplace distribution and the student's $t$ distribution. To give the motivation, we recall three former conjectures at first.

Let $B(n,p)$ denote a binomial random variable with parameters $n$ and $p$. Let $\mathcal{R}$ be the family of random variables of the form $X=\sum^n_{k=1}a_k\varepsilon_k$, where $n\ge 1$,  $a_k, k=1, \dots, n,$ are real numbers with $\sum^n_{k=1} a_k^2=1$, and $\varepsilon_k$, $k=1, 2, \dots$, are independent
Rademacher random variables (i.e., $P(\varepsilon_k=1)=P(\varepsilon_k=-1)=1/2$).

Now we  state  three former conjectures as follows:

{\bf Chv\'{a}tal's Conjecture:} For any fixed $n\geq 2$, as $m$ ranges over $\{0,\ldots,n\}$, the probability $P(B(n,m/n)\leq m)$ is the smallest when
 $m$ is closest to $\frac{2n}{3}$.

{\bf Tomaszewski's conjecture:} $\inf_{X\in \mathcal{R}}P(|X|\leq 1)=\frac{1}{2}$.

{\bf Hitczenko-Kwapie\'{n}'s conjecture:} $\inf_{X\in \mathcal{R}}P(|X|\geq 1)=\frac{7}{32}$.

Chv\'{a}tal's Conjecture has applications in machine learning. Janson \cite{Jan21} showed that  Chv\'{a}tal's Conjecture holds for large $n$. Barabesi et al. \cite{BPR23} and Sun \cite{Sun21} proved that  Chv\'{a}tal's Conjecture is true  for general $n\geq 2$.

 Tomaszewski's conjecture has many applications in probability theory, geometric analysis and computer science.  Keller and Klein \cite{KK22} completely solved Tomaszewski's conjecture.

Hitczenko-Kwapie\'{n}'s conjecture was recently proved by Hollom and Portier \cite{HP23}. In fact, Hollom and Portier \cite{HP23} found an explicit expression for the anti-concentration function $y\mapsto \inf_{X\in\mathcal{R}} P\left(|X|\geq y\right), \forall y>0.$

Let $\{X_\alpha\}$ be a family of random variables satisfying some distribution with a parameter $\alpha$, $E(X_{\alpha})$ be the expectation, and $Var(X_{\alpha})$ be the variance.  For $y>0$, define three  functions as follows:
\begin{align*}
&C(y):=\inf_{\alpha}P(X_{\alpha}\leq yE(X_{\alpha})),\\
&T(y):=\inf_{\alpha}P\left(|X_{\alpha}-E(X_{\alpha})|\leq y\sqrt{Var(X_{\alpha})}\right),\\
&H(y):=\inf_{\alpha}P\left(|X_{\alpha}-E(X_{\alpha})|\geq y\sqrt{Var(X_{\alpha})}\right).
\end{align*}

Motivated by the above three former conjectures, the three functions $C(y), T(y)$ and $H(y)$ have been studied in a series of papers recently. In the following, we recall the existing results.

\begin{itemize}
\item Li et al. \cite{LXH23} studied $C(1)$ for the {\it Poisson, geometric}  and  {\it Pascal distributions} (as to the Pascal distribution, only partial results were obtained).

\item Guo et al. \cite{GTH24} studied $C(1)$ for the {\it negative binomial distribution} and gave an affirmative answer to the conjecture posed in \cite{LXH23} on the Pascal distribution.

\item Li et al. \cite{LHZ24} studied $C(y),\forall y>0,$ for the {\it Weibull distribution} and the {\it Pareto distribution}.

\item Hu et al. \cite{HLZZ24} studied $C(y),\forall y>0,$ for some infinitely divisible distributions including the {\it inverse Gaussian, log-normal, Gumbel} and {\it Logistic distributions}.

\item Zhou et al. \cite{ZLH24} studied $C(y),\forall y\in (0,1],$ for the {\it $F$ distribution}.

\item Sun et al. \cite{SHS24a} studied $C(y), \forall y>0,$ and $T(1)$ for the {\it Gamma distribution}.

\item Sun et al. \cite{SHS24b} studied $T(1)$ for some infinitely divisible continuous distributions including the {\it Lapace, Gumbel, Logistic, Pareto, infinitely divisible Weibull, log-normal, student's $t$} and {\it inverse Gaussian distributions}.

\item Sun et al. \cite{SHS23} studied $T(1)$ for {\it $F$ distribution}.

\item Zhang et al. \cite{ZHS25} studied $T(1)$ for {\it geometric, symmetric geometric, Poisson}  and {\it symmetric Poisson distributions} among other things.

\item Hu et al. \cite{HST24} studied $H(y), \forall y>0,$ for some familiar families of distributions, and the results show that   for certain familiar families
of distributions, including the {\it uniform, exponential, nondegenerate
Gaussian} and {\it student's $t$ distributions},  $H(y)$ is not identically zero; while for some other familiar families of distributions, including the {\it binomial, Poisson, negative binomial, hypergeometric, Gamma, Pareto, Weibull, log-normal} and {\it Beta distributions},  $H(y)$ is identically zero.

\item Xie \cite{Xie25} gave $C(y), T(y)$ and $H(y),\forall y>0$, for the the {\it Rayleigh distribution} and the {\it symmetric Rayleigh distribution}.

\item Zhao \cite{Zhao25} gave $H(y),\forall y>0$, for the {\it uniform, exponential, nondegenerate Gaussian} and {\it log-normal distributions} among other things.

\end{itemize}

Based on the existing results, up to now, we have known the three functions $C(y), T(y)$ and $H(y)$ completely for all $y>0$ only for the {\it uniform, exponential, nondegenerate Gaussian (normal), log-normal, Rayleigh distribution} and the {\it symmetric Rayleigh distribution}.

{\it The motivation of this paper is to  study the three functions $C(y), T(y)$ and $H(y)$ for the Laplace distribution and the student's $t$ distribution for all $y>0$.}

Let $X_{\mu,b}$ be a Laplace random variable with the location parameter $\mu$ and the scale parameter $b$, where $\mu\in \mathbb{R}$ and $b>0$. Then we know that the density function
$$
f_{\mu,b}(x)=\frac{1}{2b}e^{-\frac{|x-\mu|}{b}},
$$
and $E(X_{\mu,b})=\mu, Var(X_{\mu,b})=2b^2$. Now, for any $y>0$, we have
\begin{align*}
&C(y)=\inf_{\mu,b}P(X_{\mu,b}\leq y\mu),\\
&T(y)=\inf_{\mu,b}P\left(|X_{\mu,b}-\mu|\leq \sqrt{2}by\right),\\
&H(y)=\inf_{\mu,b}P\left(|X_{\mu,b}-\mu|\geq \sqrt{2}by\right).
\end{align*}

\begin{thm}\label{main-thm1}
Let $X_{\mu,b}$ be a Laplace random variable with the location parameter $\mu$ and the scale parameter $b$. Then \\
(i) $C(y)=0$ if $y>0$ and $y\neq 1$, and $C(1)=\frac{1}{2}$;\\
(ii) $T(y)=1-e^{-\sqrt{2}y},\forall y>0$;\\
(iii) $H(y)=e^{-\sqrt{2}y},\forall y>0$.
\end{thm}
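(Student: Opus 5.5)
The plan is to compute each probability in closed form using the explicit Laplace distribution function, and then take the infimum over $(\mu,b)$ directly. Write $X_{\mu,b}=\mu+bZ$, where $Z$ is standard Laplace with density $\frac12e^{-|z|}$. Then $P(Z\le t)=\frac12e^{t}$ for $t\le 0$ and $P(Z\le t)=1-\frac12e^{-t}$ for $t\ge0$. Also $P(|Z|\le s)=1-e^{-s}$ for every $s\ge 0$.

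For (ii) and (iii) the parameter dependence disappears. Indeed, $|X_{\mu,b}-\mu|=b|Z|$, so
$$P\left(|X_{\mu,b}-\mu|\le \sqrt2 by\right)=P(|Z|\le\sqrt2 y)=1-e^{-\sqrt2 y},$$
and this value is the same for every $\mu$ and $b$. Hence the infimum $T(y)$ equals this value. By continuity of $Z$, $P(|Z|\ge \sqrt2 y)=e^{-\sqrt2 y}$ gives $H(y)$ in the same way.

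For (i) we use
$$P(X_{\mu,b}\le y\mu)=P\left(Z\le \frac{(y-1)\mu}{b}\right).$$
When $y=1$ the threshold is $0$, so the probability is $\frac12$ for all parameters and $C(1)=\frac12$. When $y\neq1$, set $t=(y-1)\mu/b$. Choose the sign of $\mu$ opposite to the sign of $y-1$: take $\mu=-1$ if $y>1$ and $\mu=1$ if $y<1$. Then let $b\downarrow0$, so that $t\to-\infty$ and $P(Z\le t)=\frac12 e^{t}\to0$. Since probabilities are nonnegative, $C(y)=0$. The infimum is not attained, but that is irrelevant to the claim.

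The only point needing care is checking the sign conventions in (i), so that the threshold really goes to $-\infty$ in both cases $y<1$ and $y>1$. Otherwise every step is a direct computation, and no earlier result is needed beyond the stated mean and variance $E(X_{\mu,b})=\mu$ and $Var(X_{\mu,b})=2b^2$.
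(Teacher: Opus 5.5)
Your proposal is correct and is essentially the paper's argument: both compute the probabilities in closed form from the explicit Laplace distribution function, observe that the quantities in (ii) and (iii) do not depend on $(\mu,b)$, and for (i) either send the standardized threshold $(y-1)\mu/b$ to $-\infty$ when $y\neq 1$ or note that it is identically $0$ when $y=1$. The only difference is cosmetic: your reduction $X_{\mu,b}=\mu+bZ$, combined with the trivial lower bound $0$, replaces the paper's five-case split on the signs of $\mu$ and $y-1$.
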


\begin{rem}
 Theorem \ref{main-thm1}(ii) and (iii) show that the Laplace distribution possesses both  concentration property and  anti-concentration property.
\end{rem}

Let $X_v$ be a $t$-random variable with $v$ degrees of freedom, where $3 \le v \in \mathbb{N}$. Then we know that the density function
$$
f_{v}(x)
=\frac{\Gamma(\frac{v+1}{2})}{{\sqrt{v\pi} }\Gamma(\frac{v}{2}) }{(1+\frac{x^2}{v})^{-\frac{v+1}{2}}},
$$
and $E(X_{v})=0, Var(X_{v})=\sqrt{ \frac{v}{v-2} }$. Now, for any $y>0$, we have
\begin{align*}
&C(y)=\inf_{v \ge 3}P(X_{v}\leq 0),\\
&T(y)=\inf_{v \ge 3}P\left(|X_{v}|\leq y\sqrt{ \frac{v}{v-2} } \right),\\
&H(y)=\inf_{v \ge 3}P\left(|X_{v}|\geq y\sqrt{ \frac{v}{v-2} } \right).
\end{align*}
\indent For given $y\in (1,\sqrt{3})\cup (\sqrt{3},\infty)$, define
\begin{align}\label{1.1}
\left\{
\begin{array}{l}
C_1:= 2 y^6 +  \frac{3}{4}y^4 , \\
C_2:= \frac{9}{4}C_1 + \frac{25}{32}y^8 + \frac{3}{2} \left( C_1^{\frac{2}{3}} \cdot y^{\frac{8}{3}} + C_1^{\frac{1}{3}} \cdot y^{\frac{16}{3}} \right), \\
C_3:= \frac{13}{3}y^4 + 18y^2 + 2C_2 + 11, \\
V_1:= \max \left\{100, 8y^2 \right\} , \\
V_2:= \max \left\{ V_1, \frac{3y^4}{2}, \sqrt{2C_1} \right\}, \\
V_3:= \max \left\{ V_2, 2+\frac{2y^2}{1+2y}, 2+\frac{2y^4}{1+2y^2} \right\},\\
v_0(y):=  \max\!\left\{V_3, \ \frac{2C_{3}}{|y^2-3|} +1\right\}.
\end{array}
\right.
\end{align}
Define
$
\bar{v}_0(\sqrt{3}):= 1318.4.
$
Denote by $\Phi(x)$ the distribution function of the standard normal distribution (i.e. $\Phi(x)=\frac{1}{\sqrt{2\pi}}\int_{-\infty}^x e^{-\frac{u^2}{2}}du$), and  by  $F_{v}(x)$  the distribution function of $X_{v}$.

\begin{thm}\label{main-thm2}
Let $X_v$ be a t-random variable with $v$ degrees of freedom. Then \\
(i) $C(y)=\frac{1}{2},  \forall y>0$;\\
(ii) $T(y) =
\begin{cases}
2\Phi(y)-1, &\hfill 0 < y \le 1, \\

\min \left\{ {\min_{3 \le v \le [v_0(y)]+3} \left\{ 2F_{v} \left( y\sqrt{\frac{v}{v-2}} \right) -1 \right\} }, 2\Phi(y)-1 \right\}, &\hfill 1 < y < \sqrt3, \\

\min \left\{ \min_{3 \le v \le [\bar{v}_0(\sqrt{3})]+3} { \left\{ 2F_{v} \left( \sqrt{\frac{3v}{v-2}} \right) -1 \right\} }, 2\Phi \left( \sqrt{3} \right) - 1 \right\},  &\hfill y = \sqrt3, \\

\min_{3 \le v \le [v_0(y)]+3} \left\{ 2F_{v} \left( y\sqrt{\frac{v}{v-2}} \right) - 1 \right\},   & \hfill y > \sqrt3;
\end{cases}$\\
(iii) $H(y)=
\begin{cases}
 \min_{v=3,4}{ \left\{ 2 - 2F{_v} \left( y\sqrt{\frac{v}{v-2}} \right) \right\} }, &\hfill 0 < y \le 1, \\

\min_{3 \le v \le [v_0(y)]+3} \left\{ 2 - 2F_{v} \left( y\sqrt{\frac{v}{v-2}} \right) \right\},  &\hfill  1 < y < \sqrt3 ,  \\

\min_{3 \le v \le [\bar{v}_0(\sqrt{3})]+3} \left\{ 2 - 2F_{v}\left( \sqrt{\frac{3v}{v-2}} \right) \right\},  &\hfill y = \sqrt3, \\

\min{ \left\{ \min_{3 \le v \le [v_0(y)]+3}  \left\{ 2 - 2F_{v} \left( y\sqrt{\frac{v}{v-2}} \right) \right\} , 2 - 2\Phi(y)  \right\} },  &\hfill y > \sqrt3.
\end{cases}$
\end{thm}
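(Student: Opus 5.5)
Throughout, write $g_v(y):=P\bigl(|X_v|\le y\sqrt{v/(v-2)}\bigr)=2F_v\bigl(y\sqrt{v/(v-2)}\bigr)-1$. Then $T(y)=\inf_{v\ge 3}g_v(y)$ and $H(y)=\inf_{v\ge3}(1-g_v(y))$. Part (i) is immediate: $X_v$ is symmetric with a continuous distribution, so $P(X_v\le 0)=\frac12$ for every $v$.

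For (ii) and (iii) the plan has two steps.
\begin{enumerate}
\item Handle the small-$v$ range directly. For each fixed $y$, the values $g_v(y)$ with $3\le v\le [v_0(y)]+3$ form a finite set, so their minimum is attained and simply appears in the formula.
\item Understand $v\to\infty$. Since $X_v\Rightarrow N(0,1)$ and $\sqrt{v/(v-2)}\to1$, we have $g_v(y)\to 2\Phi(y)-1$. Hence
$T(y)=\min\{\min_{v\le N}g_v(y),\ \inf_{v>N}g_v(y)\}$, and the tail infimum equals either $2\Phi(y)-1$ (the limit, approached from above) or is not smaller than the finite part (the tail approaches from below, but is dominated by some $g_v$ with $v\le N$).
\end{enumerate}
The case split at $y=1$ and $y=\sqrt3$ should come from the sign of the first-order correction in the asymptotic expansion of $g_v(y)$ in $1/v$.

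For the expansion, substitute $x=y\sqrt{v/(v-2)}$ into the density $f_v$. Expand $\log(1+x^2/v)^{-(v+1)/2}$ and the Gamma-ratio constant $\Gamma(\frac{v+1}{2})/(\sqrt{v\pi}\,\Gamma(\frac v2))$ to order $1/v$. I expect
$$g_v(y)=2\Phi(y)-1+\frac{c\,y\,\varphi(y)\,(y^2-3)}{v}\cdot(\text{sign to be fixed})+R_v(y),$$
where $\varphi$ is the standard normal density. Heuristically, the variance-normalized $t$ has kurtosis $3+6/(v-4)$, so the Edgeworth-type correction is proportional to the fourth Hermite term, $y(y^2-3)\varphi(y)$. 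Thus the first-order correction vanishes at $y=\sqrt3$ and changes sign there; the structure of the constants $C_1,C_2,C_3$ and the factor $|y^2-3|$ in $v_0(y)$ supports this.

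The threshold $y=1$ must come from a genuine monotonicity statement rather than from the asymptotics. For $0<y\le1$, I would prove that $g_v(y)$ is decreasing in $v$ for every $v\ge3$, so that $T(y)=2\Phi(y)-1$. Simultaneously, $1-g_v(y)$ is increasing in $v$, which gives $H(y)=\min_{v=3,4}$ directly. (The two values $v=3,4$ presumably appear because an even/odd parity closed form of $F_v$ is used and the monotonicity is proved separately on odd $v$ and even $v$.) To do this, differentiate in $v$ under the integral, after the change of variables $x=y\sqrt{v/(v-2)}\,s$, $s\in[0,1]$, and show the integrand derivative has a fixed sign when $y\le 1$.

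For $y\ne\sqrt3$, the quantitative step is to bound $|R_v(y)|\le C_3/v^2$ explicitly for $v\ge V_3$, with $C_1,C_2$ controlling the cubic and higher Taylor remainders. Then for $v> \frac{2C_3}{|y^2-3|}+1$ the first-order term dominates, and $g_v(y)-(2\Phi(y)-1)$ has a definite sign.
\begin{itemize}
\item When $1<y<\sqrt3$, $T$ is approached from above and $H$ from below. The tail therefore contributes $2\Phi(y)-1$ to $T$, while for $H$ the tail terms exceed $2-2\Phi(y)$, which itself exceeds...
\end{itemize}
Here one must be careful. If the tail approaches from below, the infimum over the tail is not attained, so one cannot conclude the infimum is the finite minimum. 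The correct reading is that on each side the relevant quantity increases monotonically towards its limit for $v\ge v_0$. The tail infimum is then the value at $v=[v_0]+1$, which is included in the finite minimum, and the limit is irrelevant. So I will prove eventual monotonicity in $v$, not merely the sign of the difference from the limit, by applying the same expansion to $g_{v+1}-g_v$. This yields exactly the stated dichotomy: $2\Phi(y)-1$ appears in $T$ for $1<y<\sqrt3$, and $2-2\Phi(y)$ appears in $H$ for $y>\sqrt3$.

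At $y=\sqrt3$ the $1/v$ term vanishes, so I would push the expansion to order $1/v^2$, verify its sign numerically, and bound the $1/v^3$ remainder. This yields the numerical threshold $1318.4$.

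\textbf{Main obstacle.} I expect the hardest part to be the explicit, uniform remainder bounds. This needs Stirling-type bounds on the Gamma ratio together with Taylor remainders of the logarithm, and the sub-case $y=\sqrt3$ requires one more order of the expansion.
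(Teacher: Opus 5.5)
Your overall architecture is the right one: a finite minimum, eventual monotonicity in $v$, and the normal limit, with the correction $\frac{y(3-y^2)\varphi(y)}{2v}$ explaining the switch at $y=\sqrt3$. However, two steps would not go through as proposed.

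\textbf{The case $0<y\le1$.} After your substitution the integrand is $h(v,s)=k_v\bigl(1+\frac{y^2s^2}{v-2}\bigr)^{-(v+1)/2}$ with $k_v=\Gamma(\frac{v+1}{2})/\bigl(\sqrt{(v-2)\pi}\,\Gamma(\frac v2)\bigr)$. Write $\psi$ for the digamma function and $w=\frac{y^2s^2}{v-2}$. Then
\[
\partial_v\ln h=\frac12\Bigl[\psi\bigl(\tfrac{v+1}{2}\bigr)-\psi\bigl(\tfrac v2\bigr)\Bigr]-\frac{1}{2(v-2)}-\frac12\ln(1+w)+\frac{v+1}{2(v-2)}\cdot\frac{w}{1+w}.
\]
At $v=3$, $y=1$ this equals $\ln2-1<0$ at $s=0$ and $\frac12\ln2>0$ at $s=1$. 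For large $v$ it is $-\frac{1}{4v^2}(u^2-6u+3)+O(v^{-3})$ with $u=y^2s^2$, which changes sign at $u=3-\sqrt6$. So for $y$ near $1$ the integrand's $v$-derivative has no fixed sign. The decrease of $g_v(y)$ is an integrated effect: to leading order $\partial_vg_v(y)=-\frac{1}{2v^2}\int_0^y(x^4-6x^2+3)\varphi(x)\,dx=\frac{y(y^2-3)\varphi(y)}{2v^2}$. A pointwise sign argument cannot see this.

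\textbf{Large $v$.} An expansion $g_v(y)=2\Phi(y)-1+\frac{y(3-y^2)\varphi(y)}{2v}+R_v(y)$ with $|R_v|\le C/v^2$ only fixes the sign of $g_v(y)-(2\Phi(y)-1)$. In $g_{v+1}-g_v$ the main term $-\frac{y(3-y^2)\varphi(y)}{2v(v+1)}$ has the same order as $R_{v+1}-R_v$. So ``the same expansion'' does not give monotonicity, and likewise an $O(v^{-3})$ remainder is not enough at $y=\sqrt3$.

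What you are missing is the paper's exact step criterion. Compare $J_v:=g_v(y)$ with $J_{v+2}$, not $J_{v+1}$. Then the Gamma factors cancel exactly, and the contiguous relation
$\frac{v+1}{2}F(\frac12,\frac{v+3}{2};\frac32;z)=\frac v2F(\frac12,\frac{v+1}{2};\frac32;z)+\frac12(1-z)^{-(v+1)/2}$
turns $J_{v+2}<J_v$ into $G(v,y)>1$, where $G(v,y)=v\int_1^{\sqrt{v/(v-2)}}\bigl(\frac{v+y^2}{v+y^2x^2}\bigr)^{(v+1)/2}dx$. This buys three things:
\begin{itemize}
\item A first-order expansion $G(v,y)=1+\frac{3-y^2}{2v}+R_G(v)$ with $|R_G(v)|<C_3/v^2$ decides the sign of every step for $v\ge v_0(y)$, without any Stirling bounds.
\item For $y\le1$, the pointwise inequality $\frac{v+y^2}{v+y^2x^2}\ge\frac{v+1}{v+x^2}$ ($x\ge1$) gives $G(v,y)\ge G(v,1)>1$ for all $v\ge3$, with the case $y=1$ imported from earlier work.
\item The step of $2$ is what produces $\min_{v=3,4}$ and the upper limits $[v_0(y)]+3$.
\end{itemize}

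Finally, $C_1,C_2,C_3$, $v_0(y)$ and $1318.4$ are bounds for $G$, not for an expansion of $g_v$. Your route would yield different cutoffs. The formulas as stated follow from it only if your monotonicity threshold does not exceed these cutoffs, so the constants cannot simply be borrowed.
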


\begin{rem}

In virtue of three-standard-deviation rule for the standard norm distribution, in this remark, we consider Theorem \ref{main-thm2}(ii)(iii) for $y=1,2,3$.

(i) For $ y = 1 $, we have
\begin{align*}
T(1)
&= 2\Phi(1)-1 \approx 0.6826, \\
H(1)&=\min_{v=3,4}\left\{2-2F_{v}\left(\sqrt{\frac{v}{v-2}}\right)\right\}=2-2F_{3} \left( \sqrt{3} \right)\approx 0.1817.
\end{align*}
They are the same with that in \cite{HST24} and \cite{SHS24b}.

(ii) For $ y = 2 $, we have
\begin{align*}
&C_0 = 3, C_1 = 140, C_2 \approx 1085.82, C_3 \approx 2323.97, \\
&V_1 = V_2 = V_3 = 100,\quad v_0(2) \approx 4648.94,
\end{align*}
and thus
\begin{align*}
T(2) &= \min_{3 \le v \le 4651} \left\{ 2F_{v} \left( 2\sqrt{\frac{v}{v-2}} \right) -1 \right\}, \\
H(2) &= \min \left\{ {\min_{3 \le v \le 4651} \left\{ 2 - 2F_{v} \left( 2\sqrt{\frac{v}{v-2}} \right) \right\} }, 2 - 2\Phi(2) \right\}.
\end{align*}
\indent By a Python program, we get that
$$
T(2) = \min_{3\le v\le 4651}\left\{ 2F_{v} \left( 2\sqrt{\frac{v}{v-2}}\right) -1 \right\}=
2F_{7} \left( 2\sqrt{\frac{7}{5}} \right) -1\approx 0.9501,
$$

Also, by a Python program, we get that
$$
\min_{3 \le v \le 4651} \left\{ 2 - 2F_{v} \left( 2\sqrt{\frac{v}{v-2}} \right) \right\}
= 2 - 2F_{3} \left( 2\sqrt{3} \right) \approx 0.0405,
$$
which together with $2-2\Phi(2)\approx 0.0455$ implies that
$$
H(2)=2 - 2F_{3} \left( 2\sqrt{3} \right) \approx 0.0405.
$$

(iii) For $y=3$, we have
\begin{align*}
&C_0 = 3, C_1 = 1518.75, C_2 \approx 18295.97, C_3 \approx 37115.94, \\
&V_1 = 100, V_2 = V_3 = 121.5, \quad v_0(3) \approx 12372.98,
\end{align*}
and thus
\begin{align*}
T(3) &= \min_{3 \le v \le 12375} \left\{ 2F_{v} \left( 3\sqrt{\frac{v}{v-2}} \right) - 1 \right\}, \\
H(3) &= \min{ \left\{ \min_{3 \le v \le 12375}  \left\{ 2 - 2F_{v} \left( 3\sqrt{\frac{v}{v-2}} \right) \right\} , 2 - 2\Phi(3)  \right\} }.
\end{align*}
\indent By a Python program, we get that
$$
T(3) = \min_{3\le v\le 12375}\left\{ 2F_{v} \left( 3\sqrt{\frac{v}{v-2}}\right) -1 \right\}=
2F_{3} \left( 3\sqrt{3} \right) -1\approx 0.986153,
$$

Also, by a Python program, we get that
$$
\min_{3 \le v \le 12375} \left\{ 2 - 2F_{v} \left( 3\sqrt{\frac{v}{v-2}} \right) \right\}
= 2 - 2F_{12375} \left( 3\sqrt{\frac{12375}{12375-2}} \right) \approx 0.002703,
$$
which together with $2-2\Phi(3)\approx 0.002700$ implies that
$$
H(3)=2-2\Phi(3) \approx 0.002700.
$$

\end{rem}

\begin{rem}
 Theorem \ref{main-thm2}(ii) and (iii) show that the student's $t$ distribution possesses both  concentration property and  anti-concentration property.
\end{rem}

The rest of this paper is organized as follows. The proofs of Theorem \ref{main-thm1} and Theorem \ref{main-thm2}  will be given in Section 2 and Section 3, respectively.

\section{Proof of Theorem \ref{main-thm1}}\label{S2}\setcounter{equation}{0}

The distribution function of $X_{\mu,b}$ is (see e.g. Appendix (i) of \cite{SHS24b})
\begin{align*}
F_{X_{\mu,b}}(x)=\left\{
\begin{array}{cl}
\frac{1}{2}e^{\frac{x-\mu}{b}},& \mbox{if}\ x<\mu,\\
1-\frac{1}{2}e^{-\frac{x-\mu}{b}},& \mbox{if}\ x\geq \mu.
\end{array}
\right.
\end{align*}
\indent (i) We have the following five cases:\\
\indent (i.1) $\mu>0,y\geq 1$. Now $y\mu\geq \mu$ and thus
\begin{align*}
C(y)&=\inf_{\mu,b}P(X_{\mu,b}\leq y\mu)=\inf_{\mu,b}\left(1-\frac{1}{2}e^{-\frac{(y-1)\mu}{b}}\right)
=\lim_{\frac{\mu}{b}\to 0+}\left(1-\frac{1}{2}e^{-\frac{(y-1)\mu}{b}}\right)=\frac{1}{2}.
\end{align*}
\indent (i.2) $\mu>0,0<y<1$. Now $y\mu<\mu$ and thus
\begin{align*}
C(y)&=\inf_{\mu,b}P(X_{\mu,b}\leq y\mu)=\inf_{\mu,b}\frac{1}{2}e^{-\frac{(1-y)\mu}{b}}
=\lim_{\frac{\mu}{b}\to+\infty}\frac{1}{2}e^{-\frac{(1-y)\mu}{b}}=0.
\end{align*}
\indent (i.3) $\mu=0$. Now $y\mu=0=\mu$ and thus
\begin{align*}
P(X_{\mu,b}\leq y\mu)=\frac{1}{2}e^{-\frac{(1-y)\mu}{b}}
\equiv \frac{1}{2}.
\end{align*}
Hence in this case, $C(y)=\frac{1}{2}$.\\
\indent (i.4) $\mu<0,y>1$. Now $y\mu<\mu$ and thus
\begin{align*}
C(y)&=\inf_{\mu,b}P(X_{\mu,b}\leq y\mu)=\inf_{\mu,b}\frac{1}{2}e^{\frac{(y-1)\mu}{b}}
=\lim_{\frac{\mu}{b}\to-\infty}\frac{1}{2}e^{\frac{(y-1)\mu}{b}}=0.
\end{align*}
\indent (i.5) $\mu<0,0<y\leq 1$. Now $y\mu\geq \mu$ and thus
\begin{align*}
C(y)&=\inf_{\mu,b}P(X_{\mu,b}\leq y\mu)=\inf_{\mu,b}\left(1-\frac{1}{2}e^{\frac{(1-y)\mu}{b}}\right)
=\lim_{\frac{\mu}{b}\to 0-}\left(1-\frac{1}{2}e^{\frac{(1-y)\mu}{b}}\right)=\frac{1}{2}.
\end{align*}
By (i.1)-(i.5), we obtain that (i) holds.

(ii) Since $b>0$ and $y>0$, we have
\begin{align*}
T(y)&=\inf_{\mu,b}P\left(|X_{\mu,b}-\mu|\leq \sqrt{2}by\right)\\
&=\inf_{\mu,b}\left[F_{X_{\mu,b}}(\mu+\sqrt{2}by)-F_{X_{\mu,b}}(\mu-\sqrt{2}by)\right]\\
&=\inf_{\mu,b}\left[1-\frac{1}{2}e^{-\frac{(\mu+\sqrt{2}by)-\mu}{b}}-
\frac{1}{2}e^{\frac{(\mu-\sqrt{2}by)-\mu}{b}}\right]\\
&=1-e^{-\sqrt{2}y}.
\end{align*}
\indent
(iii) Since $b>0$ and $y>0$, we have
\begin{align*}
T(y)&=\inf_{\mu,b}P\left(|X_{\mu,b}-\mu|\geq \sqrt{2}by\right)\\
&=\inf_{\mu,b}\left[1-F_{X_{\mu,b}}(\mu+\sqrt{2}by)+F_{X_{\mu,b}}(\mu-\sqrt{2}by)\right]\\
&=\inf_{\mu,b}\left[1-\left(1-\frac{1}{2}e^{-\frac{(\mu+\sqrt{2}by)-\mu}{b}}\right)+
\frac{1}{2}e^{\frac{(\mu-\sqrt{2}by)-\mu}{b}}\right]\\
&=e^{-\sqrt{2}y}.
\end{align*}
The proof is complete.\hfill\fbox

%
%

\section{Proof of Theorem \ref{main-thm2}}\label{S3}\setcounter{equation}{0}

Define the hypergeometric function $F(a, b; c; z)$ by (cf. \cite[P. 45]{Rai60})
\[
F(a, b;c;z)
:= \sum_{j=0}^{ + \infty}{ \frac{ (a)_j (b)_j}{(c)_j} \cdot \frac{ z^j }{ j !}}, |z| < 1,
\]
where $(\alpha)_j := \alpha ( \alpha + 1) \cdots (  \alpha + j - 1 ) $ for $ j \ge 1 $, and $ ( \alpha )_0 := 1 $ for $  \alpha \ne 0$. Then the  distribution function $F_v$ of $ X_{v} $ can be expressed by
\begin{align}\label{3.1}
F_v(x)=\frac{1}{2}+x\Gamma\left(\frac{v+1}{2}\right)\frac{F \left(\frac{1}{2}, \frac{v+1}{2};\frac{3}{2};-\frac{x^2}{v} \right)}{\sqrt{v\pi}\Gamma({\frac{v}{2}})},\ x\in \mathbb{R}.
\end{align}
\indent By the fact that $E(X_{v})=0$ and (\ref{3.1}), we have
$$
P(X_v\leq yE(X_v))=P(X_v\leq 0)=F_v(0)=\frac{1}{2}, \forall y>0.
$$
It follows that for any $y>0, C(y)=\frac{1}{2}$, and thus  (i) holds. In the following, we focus on the proofs of (ii) and (iii).

By the symmetry of the density function and (\ref{3.1}),  we have
\begin{align*}
T(y)
&=P \left({|X_v|}\leq y\sqrt{\frac{v}{v-2}} \right)\\
&=2F{_v} \left(y\sqrt{\frac{v}{v-2}} \right)-1\\
&=2y\sqrt{\frac{v}{v-2}}\frac{\Gamma(\frac{v+1}{2})}{\sqrt{v\pi}\Gamma(\frac{v}{2}) }{F \left(\frac{1}{2}, \frac{v+1}{2};\frac{3}{2};-\frac{y^2}{v-2} \right)}
\\
&
=:J{_v}, \\
H(y)
&= 1 - P \left({|X_v|}\leq y\sqrt{\frac{v}{v-2}} \right)=1 - J{_v}.
\end{align*}

Consider the weak monotonicity of $J_v$:
\begin{align*}{\quad\quad\quad\quad}
\frac{J{_{v+2}}}{J{_v}}<1
&\Leftrightarrow
\frac
{2y\sqrt{\frac{v+2}{v}}\frac{\Gamma(\frac{v+3}{2})}{\sqrt{(v+2)\pi}\Gamma(\frac{v+2}{2}) }{F \left (\frac{1}{2}, \frac{v+3}{2};\frac{3}{2};-\frac{y^2}{v} \right )}}
{2y\sqrt{\frac{v}{v-2}}\frac{\Gamma(\frac{v+1}{2})}{\sqrt{v\pi}\Gamma(\frac{v}{2}) }{F \left(\frac{1}{2}, \frac{v+1}{2};\frac{3}{2};-\frac{y^2}{v-2} \right)}}<1 \\
&\Leftrightarrow
\frac{(v+1){(v-2)}^{\frac{1}{2}}}{{v}^{\frac{3}{2}}} \frac{F \left (\frac{1}{2}, \frac{v+3}{2};\frac{3}{2};-\frac{y^2}{v} \right )}{F \left (\frac{1}{2}, \frac{v+1}{2};\frac{3}{2};-\frac{y^2}{v-2} \right )}<1
\\
&\Leftrightarrow
F \left (\frac{1}{2}, \frac{v+3}{2};\frac{3}{2};-\frac{y^2}{v} \right )
<\frac{{v}^{\frac{3}{2}}}{(v+1){(v-2)}^{\frac{1}{2}}}F \left (\frac{1}{2}, \frac{v+1}{2};\frac{3}{2};-\frac{y^2}{v-2} \right ).
\end{align*}
By the contiguous function relation of the hypergeometric function (cf. \cite[P. 71, 21(13)]{Rai60}) and the fact that $ F(a, b;a;z) = {{(1-z)}^{-b}}$ (cf. \cite[P. 47, Line -4]{Rai60}), we have
\begin{align*}
{\quad\quad\quad\quad}
\frac{v+1}{2}F \left( \frac{1}{2}, \frac{v+3}{2};\frac{3}{2};-\frac{y^2}{v} \right)
&=\frac{v}{2}F \left( \frac{1}{2}, \frac{v+1}{2};\frac{3}{2};-\frac{y^2}{v} \right) +\frac{1}{2}F \left( \frac{1}{2}, \frac{v+1}{2};\frac{1}{2};-\frac{y^2}{v} \right) \\
&=\frac{v}{2}F \left( \frac{1}{2}, \frac{v+1}{2};\frac{3}{2};-\frac{y^2}{v} \right) +\frac{1}{2}{ \left( {\frac{v}{v+y^2} } \right)^{\frac{v+1}{2}}},
\end{align*}
which implies that
\begin{align*}
F \left (\frac{1}{2}, \frac{v+3}{2};\frac{3}{2};-\frac{y^2}{v} \right )
&=\frac{v}{v+1}F \left (\frac{1}{2}, \frac{v+1}{2};\frac{3}{2};-\frac{y^2}{v} \right )+\frac{1}{v+1}{ \left( {\frac{v}{v+y^2} } \right)^{\frac{v+1}{2}}}.
\end{align*}
Then we have
\begin{align}
&\frac{J{_{v+2}}}{J{_v}}<1\nonumber\\
&\Leftrightarrow
\frac{v}{v+1}F \left( \frac{1}{2}, \frac{v+1}{2};\frac{3}{2};-\frac{y^2}{v} \right) +\frac{1}{v+1}{ \left( {\frac{v}{v+y^2}} \right) }^{\frac{v+1}{2}}
 <\frac{{v}^{\frac{3}{2}}}{(v+1){(v-2)}^{\frac{1}{2}}}F \left( \frac{1}{2}, \frac{v+1}{2};\frac{3}{2};-\frac{y^2}{v-2} \right) \nonumber\\
&\Leftrightarrow
F \left( \frac{1}{2}, \frac{v+1}{2};\frac{3}{2};-\frac{y^2}{v} \right) +\frac{1}{v}{ \left( {\frac{v}{v+y^2}} \right)^{\frac{v+1}{2}}}
 <{ \left( \frac{v}{v-2} \right) }^{\frac{1}{2}}F \left( \frac{1}{2}, \frac{v+1}{2};\frac{3}{2};-\frac{y^2}{v-2} \right) \nonumber\\
&\Leftrightarrow
\int_{0}^{y}{{f}{_v{(x)}}} \, dx+
\frac{y\Gamma(\frac{v+1}{2})}{\sqrt{v\pi }\Gamma(\frac{v}{2})}  \cdot \frac{1}{v} \cdot { \left( {\frac{v}{v+y^2}} \right)^{\frac{v+1}{2}}}
<\int_{0}^{y \sqrt{\frac{v}{v-2}}} {{f}{_v{(x)}}} \, dx\nonumber\\
&\Leftrightarrow
\frac{y\Gamma(\frac{v+1}{2})}{\sqrt{v\pi} \Gamma(\frac{v}{2})}  \cdot\frac{1}{v} \cdot { \left( {\frac{v}{v+y^2}} \right)^{\frac{v+1}{2}}}
<\int_{y}^{y \sqrt{\frac{v}{v-2}}}{{f}{_v{(x)}}} \, dx\nonumber\\
&\Leftrightarrow
\frac{y}{v} { \left( {\frac{v}{v+y^2}} \right)^{\frac{v+1}{2}}}
<\int_{y}^{y \sqrt{\frac{v}{v-2}}}{ \left( \frac{v}{v+x^2} \right)^{\frac{v+1}{2}}} \, dx \nonumber\\
&\Leftrightarrow
1<\frac{v}{y}\int_{y}^{y \sqrt{\frac{v}{v-2}}}{ \left( \frac{v+y^2}{v+x^2} \right)^{\frac{v+1}{2}}}dx\nonumber\\
&\quad\quad=v \int_{1}^{ \sqrt{\frac{v}{v-2}}}{ \left( \frac{v+y^2}{v+y^2x^2} \right)^{\frac{v+1}{2}}} \, dx=:G(v, y).\label{3.2}
\end{align}
Similarly, we have
\[
G(v, y)<1
\Leftrightarrow
\frac{J{_{v+2}}}{J{_v}}>1.
\]
\indent In the following, we  consider any real number $v \ge 3$. For convenience, for $v\geq 3, y>0$ and $x\in [1,\sqrt{\frac{v}{v-2}}]$, define
\begin{align}\label{3.3}
u(v):=\sqrt{\frac{v}{v-2}}, \quad L(v):=u(v)-1, \quad a:=y ^2,
\end{align}
and
\begin{align}\label{3.4}
f(v, x):= \left( \frac{v+a}{v+ax^2} \right)^{\frac{v+1}{2}}.
\end{align}
Then $u(v)>1$ and $L(v)>0$.

By the properties of $t$ distribution, we have (cf. \cite{SHS24b})
$$
\lim_{v\to+\infty}F_v(x)=\Phi(x),
$$
where $\Phi(x) $ is the distribution function of the standard normal distribution.


\subsection{The case $ y \in (0, 1] $}

For $ y \in (0, 1] $ and $ x \ge 1$, we have
$$
\frac{v+y^2}{v+y^2x^2}
= \frac{\frac{v}{y^2}+1}{\frac{v}{y^2}+x^2}
= 1 - \frac{x^2-1}{\frac{v}{y^2}+x^2}
\ge 1 - \frac{x^2-1}{v+x^2}
= \frac{v+1}{v+x^2}.
$$
By the proof of \cite[Theorem 1.3 (vii)]{SHS24b},  we know that
$$
v \int_{1}^{ \sqrt{\frac{v}{v-2}}}{ \left( \frac{v+1}{v+x^2} \right)^{\frac{v+1}{2}}}dx >1,\ \forall v \ge 3.
$$
Hence, for all $ y \in (0, 1]$,
$$
G(v,y) > 1,\ \forall v \ge 3,
$$
which implies that for all $v \ge 3, \frac{J_{v+2}}{J_v} <1$. It follows that
\begin{align*}
&\begin{aligned}
T(y)
&=\inf_{v \geq 3}{P \left({|X-E(X)|}\leq y\sqrt{Var(X)} \right)}
\\
&= \lim_{v \to +\infty}\left(2F{_v} \left( y\sqrt{\frac{v}{v-2}} \right)-1\right)
= 2\Phi(y)-1,
\end{aligned}
\\
&\begin{aligned}
H(y)
&=\inf_{v \geq 3}{P \left({|X-E(X)|} \ge  y\sqrt{Var(X)} \right)}
\\
&= \min_{v=3,4}{ \left\{ 2 - 2F{_v} \left( y\sqrt{\frac{v}{v-2}} \right) \right\} }.
\end{aligned}
\end{align*}
The proof in this case is complete.\hfill\fbox

\subsection{The case $ y \in (1, \sqrt{3}) \cup (\sqrt{3},+\infty) $}

 At first, we give four lemmas, whose proofs will be postponed to next section. Recall that $L(v), f(v,x)$ and $G(v,y)$ are defined by (\ref{3.3}), (\ref{3.4}) and (\ref{3.2}), respectively.

\begin{lemma}\label{lemma1}
Let \(C_0 = 3 \), \(V_0 = 100\). For \( v \ge V_0 \), we have
\[ L(v) = \frac{1}{v} + \frac{3}{2v^2} + R_0(v) , |R_0(v)| < \frac{C_0}{v^3},
\]
where $ R_0(v) := \frac{\phi_1^{(3)}(\xi_1)}{6v^3}, \phi_1(x) := (1 - 2x)^{-\frac{1}{2}}, \xi_1 \in (0,\frac{1}{v}). $
\end{lemma}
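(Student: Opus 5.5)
We write $u(v)=\sqrt{\frac{v}{v-2}}=(1-\frac{2}{v})^{-1/2}=\phi_1(1/v)$, where $\phi_1(x)=(1-2x)^{-1/2}$ is smooth on $[0,\frac12)$. The plan is to expand $\phi_1$ by Taylor's theorem with Lagrange remainder to second order at $x=0$, evaluated at $x=1/v\in(0,\frac{1}{100}]$.

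First compute the derivatives:
\begin{align*}
\phi_1'(x)&=(1-2x)^{-3/2}, & \phi_1''(x)&=3(1-2x)^{-5/2}, & \phi_1^{(3)}(x)&=15(1-2x)^{-7/2}.
\end{align*}
This gives $\phi_1(0)=1$, $\phi_1'(0)=1$ and $\phi_1''(0)/2=3/2$. Taylor's formula then yields, for some $\xi_1\in(0,\frac1v)$,
\begin{align*}
u(v)=1+\frac1v+\frac{3}{2v^2}+\frac{\phi_1^{(3)}(\xi_1)}{6v^3}.
\end{align*}
Subtracting $1$ gives exactly the stated expansion of $L(v)$, with $R_0(v)=\phi_1^{(3)}(\xi_1)/(6v^3)$.

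For the bound, note that $\phi_1^{(3)}$ is positive and increasing on $[0,\frac12)$. Since $\xi_1<\frac1v\le\frac{1}{100}$, we get
\begin{align*}
0<\phi_1^{(3)}(\xi_1)<15\left(1-\tfrac{2}{100}\right)^{-7/2}=15\cdot(0.98)^{-7/2}.
\end{align*}
Estimate $(0.98)^{-7/2}$ crudely: $(0.98)^{-1}<1.0205$, so $(0.98)^{-7/2}<1.0205^{4}<1.085$. Hence $\phi_1^{(3)}(\xi_1)<16.3<18$, and therefore $|R_0(v)|<\frac{18}{6v^3}=\frac{3}{v^3}=\frac{C_0}{v^3}$.

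The only step needing care is this numerical bound on $(1-2\xi_1)^{-7/2}$. There is ample slack, since we only need it below $1.2$, so any monotonicity argument plus a rough power estimate suffices. The strict inequality follows because $\xi_1<1/v$ strictly and $\phi_1^{(3)}$ is strictly increasing.
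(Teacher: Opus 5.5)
Your proof is correct and follows essentially the same route as the paper: a second-order Taylor expansion of $\phi_1(t)=(1-2t)^{-1/2}$ at $t=0$ with Lagrange remainder, evaluated at $t=1/v\le 1/100$, together with the bound $\phi_1^{(3)}(\xi_1)\le 15\,(50/49)^{7/2}<18$. Your explicit check that $(0.98)^{-7/2}<1.085$ supplies the arithmetic the paper only asserts; the paper's final bound is also misprinted there as $3/v^2$ instead of $3/v^3$.
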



Recall that  $C_1,C_2,C_3, V_1, V_2$ and $V_3$ are defined by (\ref{1.1}).

\begin{lemma}\label{lemma2}
For \( v \ge V_1\), we have
\[
\ln f(v, x)= E_0(x) + \frac{1}{v} E_1(x) + \frac{1}{v^2} R_1(v, x),\  |R_1(v, x)| < C_1 ,
\]
where
\begin{align}
E_0(x)&:=\frac{a}{2} (1-x^2),\ E_1(x):=\frac{a^2}{4} (x^4-1)-\frac{a}{2} (x^2-1),\label{3.5}\\
R_1(v, x)&:=\frac{a^2}{4} (x^4-1) + \frac{v+1}{2} \left( \frac{\phi_2 ^{(3)}(\xi_2)}{6} \cdot \frac{a^3}{v}-\frac{\phi_2 ^{(3)}(\xi_3)}{6} \cdot \frac{a^3 x^6}{v} \right),\label{3.6}\\
\phi_2(x)&:= \ln(1+x),\ \xi_2 \in (0, \frac{a}{v}),\ \xi_3 \in (0, \frac{ax^2}{v}).\nonumber
\end{align}
\end{lemma}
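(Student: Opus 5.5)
The plan is a direct second-order Taylor expansion of the logarithm, followed by crude bounds on the remainder using $v\ge V_1$ and $x\in[1,u(v)]$. By (\ref{3.4}) and $a=y^2$,
\[
\ln f(v,x)=\frac{v+1}{2}\left[\phi_2\!\left(\frac{a}{v}\right)-\phi_2\!\left(\frac{ax^2}{v}\right)\right],\qquad \phi_2(t)=\ln(1+t).
\]

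\textbf{Step 1 (expansion).} Since $\phi_2'(0)=1$ and $\phi_2''(0)=-1$, Taylor's formula with Lagrange remainder gives, for $t>0$,
\[
\phi_2(t)=t-\frac{t^2}{2}+\frac{\phi_2^{(3)}(\xi)}{6}t^3,\qquad \xi\in(0,t).
\]
I apply this with $t=a/v$ (point $\xi_2$) and $t=ax^2/v$ (point $\xi_3$). Substituting, the bracket equals
\[
\frac{a(1-x^2)}{v}+\frac{a^2(x^4-1)}{2v^2}+\frac{a^3}{6v^3}\Big(\phi_2^{(3)}(\xi_2)-\phi_2^{(3)}(\xi_3)x^6\Big).
\]

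\textbf{Step 2 (matching the terms).} Multiplying by $\frac{v+1}{2}=\frac v2\big(1+\frac1v\big)$ and collecting powers of $v$:
\begin{itemize}
\item the first term gives $\frac a2(1-x^2)+\frac1v\cdot\frac a2(1-x^2)$;
\item the second gives $\frac1v\cdot\frac{a^2}{4}(x^4-1)+\frac1{v^2}\cdot\frac{a^2}{4}(x^4-1)$;
\item the third equals $\frac1{v^2}\cdot\frac{v+1}{2}\Big(\frac{\phi_2^{(3)}(\xi_2)}{6}\cdot\frac{a^3}{v}-\frac{\phi_2^{(3)}(\xi_3)}{6}\cdot\frac{a^3x^6}{v}\Big)$.
\end{itemize}
Comparing with (\ref{3.5}) and (\ref{3.6}), the order-$1$ and order-$1/v$ parts are exactly $E_0(x)$ and $E_1(x)$. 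The order-$1/v^2$ part is exactly $R_1(v,x)$, so the identity holds.

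\textbf{Step 3 (bounding $R_1$).} First I bound $x$. Since $1\le x\le u(v)$ and $v\ge V_1\ge 100$, we have $x^2\le \frac{v}{v-2}\le\frac{100}{98}$. Hence $0\le x^4-1<1$ and $x^6<\frac{11}{10}$. This gives
\[
0\le \frac{a^2}{4}(x^4-1)<\frac34 a^2.
\]
Next, $\phi_2^{(3)}(\xi)=\frac{2}{(1+\xi)^3}\in(0,2)$ for $\xi>0$, so each quantity $\frac{\phi_2^{(3)}(\xi_i)}{6}$ lies in $(0,\frac13)$. The difference in the second term is therefore bounded in absolute value by $\frac{a^3}{3v}\max\{1,x^6\}$. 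This yields
\[
\left|\frac{v+1}{2}\Big(\cdots\Big)\right|\le \frac{v+1}{6v}\,a^3x^6\le\frac{101}{600}\cdot\frac{11}{10}\,a^3<2a^3.
\]
Adding the two bounds gives $|R_1(v,x)|<\frac34a^2+2a^3=\frac34y^4+2y^6=C_1$.

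\textbf{Expected obstacle.} There is no real obstacle, only bookkeeping in Step 2: the extra $\frac1v$ coming from the factor $(v+1)/2$ must be routed into the right order. The condition $v\ge 8y^2$ in $V_1$ is not needed for this bound (the Lagrange form only needs $t>-1$). It presumably matters in later lemmas, where $a/v$ must be small, and it can be remarked upon but not used here.
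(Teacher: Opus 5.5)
Your proposal is correct and follows essentially the same route as the paper. Both use a second-order Lagrange expansion of $\ln(1+t)$ at $t=a/v$ and $t=ax^2/v$, regroup via $\frac{v+1}{2}=\frac v2\bigl(1+\frac1v\bigr)$, and bound the remainder using $0<\phi_2^{(3)}<2$.

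The differences are cosmetic. You use the sharper range $x^2\le\frac{100}{98}$ where the paper uses $x\le\sqrt2$, and you bound the difference of the two remainder terms by a maximum where the paper uses a sum. Your remark that $v\ge 8y^2$ is not needed here is accurate. The paper invokes that condition only to ensure $t_1,t_2<1$, and the upper bound on $\phi_2^{(3)}$ does not require it.
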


\begin{lemma}\label{lemma3}
 For \( v\ge V_2 \), we have
\[
f(v, x) = e^{E_0(x)} \left( 1+\frac{E_1(x)}{v}  + \frac{R_2(v, x)}{v^2} \right),\ |R_2(v, x)| < C_2,
\]
where
\begin{align}\label{lem-3.3-a}
R_2(v, x)&:= \frac{2R_1(v, x) + E_1^2(x)}{2} + \frac{R_1(v, x) \cdot E_1(x) }{v} + \frac{R_1^2(v, x)}{2v^2} + \frac{e^{\xi_4}}{6} v^2 z^3,
\end{align}
and $E_1(x)$ is defined in (\ref{3.5}),  $R_1(v,x)$ is defined by (\ref{3.6}), \(z := \frac{1}{v} E_1(x) + \frac{1}{v^2} R_1(v, x)\), \(\xi_4\) lies between 0 and \(z\).

\end{lemma}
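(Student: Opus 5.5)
We prove Lemma \ref{lemma3} by exponentiating the expansion of Lemma \ref{lemma2} and bounding the Taylor remainder term by term. Write $\ln f(v,x)=E_0(x)+z$ with $z=\frac{1}{v}E_1(x)+\frac{1}{v^2}R_1(v,x)$. Taylor's formula with Lagrange remainder gives
\[
e^{z}=1+z+\frac{z^2}{2}+\frac{e^{\xi_4}}{6}z^3,
\]
with $\xi_4$ between $0$ and $z$. Expanding $z$ and $z^2/2$ in powers of $1/v$ and collecting everything beyond the $1/v$ term, multiplied by $v^2$, gives exactly the expression (\ref{lem-3.3-a}) for $R_2(v,x)$. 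Hence $f(v,x)=e^{E_0(x)}\bigl(1+\frac{E_1(x)}{v}+\frac{R_2(v,x)}{v^2}\bigr)$ is an identity, and only the bound $|R_2|<C_2$ needs work.

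\textbf{Step 1: bound $E_1$.} Since $x\in[1,u(v)]$, we have $0\le x^2-1\le \frac{2}{v-2}$. Using $v\ge V_2\ge 100$, we get $x^2-1\le 2/98$ and $x^4-1\le$ a similarly small quantity. Then
\[
|E_1(x)|\le \frac{a^2}{4}(x^4-1)+\frac{a}{2}(x^2-1)\lesssim \frac{a^2}{v}+\frac{a}{v},
\]
so $E_1^2$ is $O(a^4/v^2)$. Here it is convenient to bound crudely in terms of $y^8$; this is presumably the origin of the $\frac{25}{32}y^8$ term in $C_2$. Alternatively, the sharper, $v$-free bound $E_1^2\le \frac{25}{16}y^8\cdot\frac{1}{?}$ can be extracted using $V_1\ge 8y^2$. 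I would first try to show $|E_1|\le \frac{5}{4}y^4$, so that $\frac{E_1^2}{2}\le\frac{25}{32}y^8$.

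\textbf{Step 2: the remaining pieces.} With $|R_1|<C_1$ from Lemma \ref{lemma2}:

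\noindent $\bullet$ $|R_1|\le C_1$ contributes $C_1$.

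\noindent $\bullet$ $|R_1E_1|/v\le C_1\cdot\frac{5}{4}y^4/v$, which is small.

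\noindent $\bullet$ $R_1^2/(2v^2)\le C_1^2/(2v^2)\le \frac14$, because $v\ge\sqrt{2C_1}$. This is exactly why $\sqrt{2C_1}$ appears in $V_2$.

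\noindent $\bullet$ For the cubic term, note $v^2|z|^3=v^{-1}\bigl(|E_1|+|R_1|/v\bigr)^3$. Expanding the cube by the binomial theorem produces cross terms of the shape $C_1^{2/3}y^{8/3}$ and $C_1^{1/3}y^{16/3}$. These are presumably what appear in $C_2$, after applying $v\ge \frac{3y^4}{2}$ and $v\ge\sqrt{2C_1}$ to absorb the powers of $1/v$ and reduce everything to such mixed products, perhaps via AM--GM.

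\noindent $\bullet$ The factor $e^{\xi_4}\le e^{|z|}$ is bounded by an absolute constant (e.g.\ $e^{1/2}$, or at most $3/2$ up to rounding), since $|z|\le \frac{5y^4}{4v}+\frac{C_1}{v^2}$ is small once $v\ge V_2$. This explains the factor $\frac{3}{2}$ and the way $\frac94 C_1$ arises from combining the $C_1$ contributions.

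\textbf{Step 3: assemble.} Summing the four bounds and using the constraints $v\ge V_1$ and $v\ge V_2$ to replace every $1/v$ factor by a constant gives $|R_2|<C_2$.

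The main obstacle is the bookkeeping that reproduces exactly the stated $C_2$, with its coefficients $\frac94$, $\frac{25}{32}$ and $\frac32$ and the fractional powers $C_1^{2/3}y^{8/3}$, $C_1^{1/3}y^{16/3}$. I would first obtain any bound of the form $\text{const}\cdot(C_1+y^8+\cdots)$ and then tighten each estimate, using exactly the thresholds $\frac{3y^4}{2}$ and $\sqrt{2C_1}$ in $V_2$, until the terms match.
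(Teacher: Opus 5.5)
Your derivation of the identity defining $R_2$ matches the paper's. However, the actual content of the lemma, the bound $|R_2(v,x)|<C_2$, is never established. Your Step~3 simply asserts that the pieces add up to less than $C_2$, and the estimates you commit to do not give that.

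Several of them are off. First, $v\ge\sqrt{2C_1}$ gives $R_1^2/(2v^2)\le C_1/4$, not $\le\frac{1}{4}$. Second, with $|E_1|\le\frac{5}{4}y^4$ and $v\ge\frac{3}{2}y^4$ (the only constraint that controls $y^4/v$ for large $y$), $|R_1E_1|/v$ is bounded only by $\frac{5}{6}C_1$, which is not small. Third, $|z|$ is only bounded by $\frac{5}{6}+\frac{1}{2}$, so $e^{\xi_4}\le e^{1/2}$ does not follow.

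The decisive problem is the allocation. If $E_1^2/2$ alone uses up the whole $\frac{25}{32}y^8$, your upper bound for the cubic term still contains $\frac{1}{6}\cdot\frac{(5y^4/4)^3}{v}$ (even with $e^{\xi_4}$ replaced by $1$). At $v=V_2=\frac{3}{2}y^4$, which is the value of $V_2$ for all large $y$, this equals $\frac{125}{576}y^8$. Since $C_2-\frac{25}{32}y^8=O(y^{22/3})$, the total you would assemble exceeds $C_2$ for large $y$. Also, expanding $(|E_1|+|R_1|/v)^3$ binomially produces only integer powers of $y$ and $C_1$, so that is not where $C_1^{2/3}y^{8/3}$ and $C_1^{1/3}y^{16/3}$ come from.

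The paper proceeds as follows. From $E_1'(x)=ax(ax^2-1)>0$ on $[1,\sqrt{2}]$ it gets $0\le E_1<\frac{3}{4}a^2$. For $v\ge V_2$ this gives $E_1/v\le\frac{1}{2}$ and $|R_1|/v^2\le\frac{1}{2}$, hence $|z|\le 1$ and $\frac{e^{\xi_4}}{6}<\frac{e}{6}<\frac{1}{2}$. It then writes $v^2z^3=(v^{-1/3}E_1+v^{-4/3}R_1)^3$ and applies $v\ge\frac{3}{2}a^2$ and $v\ge\sqrt{2C_1}$ to the two summands separately, bounding them by $a^{4/3}$ and $C_1^{1/3}$. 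The sum $C_1+\frac{9}{32}a^4+\frac{1}{2}C_1+\frac{1}{4}C_1+\frac{1}{2}(a^{4/3}+C_1^{1/3})^3$ is exactly $C_2$. Here $\frac{25}{32}=\frac{9}{32}+\frac{1}{2}$, $\frac{9}{4}=1+\frac{1}{2}+\frac{1}{4}+\frac{1}{2}$, and the $\frac{3}{2}$ is $3\cdot\frac{1}{2}$ from the binomial coefficients, not a bound on $e^{\xi_4}$.

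You never needed to reproduce $C_2$ exactly, only to stay below it, and your own first observation would have done so more cheaply. The bound $x^2-1\le\frac{2}{v-2}$ gives $0\le E_1\le\frac{a^2}{4}(x^4-1)\le\frac{a^2(v-1)}{(v-2)^2}<0.7$ for $v\ge V_2$. After that, $E_1^2/2$, $|R_1E_1|/v$ and the $E_1$-part of the cubic term are negligible, and one gets $|R_2|<\frac{3}{2}C_1<C_2$. You dropped that sharper bound in favour of the cruder $|E_1|\le\frac{5}{4}y^4$, which is exactly what breaks the bookkeeping.
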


\begin{lemma}\label{lemma4}
For  \( v\ge V_3 \), we have
\[
G(v, y) = 1 + \frac{3-a}{2v} + R_G(v),  |R_G(v)| <  \frac{C_{3}}{v^2},
\]
where
\begin{align*}
R_G(v) &:=A_1 + A_2+ R_3(v) + R_4(v),\\
A_1 &:= v \int_0^{L(v)} e^{E_0(1+m)} \frac{E_1(1+m)}{v}\, dm,\\
 A_2 &:= v \int_0^{L(v)} e^{E_0(1+m)}\frac{R_2(v, 1+m)}{v^2}dm,\\
R_3(v) &:= v R_0(v)-\left( \frac{9 a}{8 v^3} + \frac{av R_0^2(v)}{2} + \frac{3 a}{2 v^2} + a R_0(v) + \frac{3 a R_0(v)}{2 v} \right),\\
R_4(v) &:= \int_0^{L(v)}\left[\frac{e^{\xi_5}}{2} a^2m^2 + m^2 \left( 1 - am + \frac{e^{\xi_5}}{2} a^2m^2 \right) \left( - \frac{a}{2} + \frac{e^{\xi_6}}{8} {a^2 m^2} \right)\right]dm,
\end{align*}
$E_0(\cdot), E_1(\cdot)$ are defined by (\ref{3.5}), $R_2(\cdot,\cdot)$ is defined by (\ref{lem-3.3-a}), $R_0(v)$ is defined in Lemma \ref{lemma1},  $\xi_5 \in (-am, 0)$ and $\xi_6 \in (-\frac{am^2}{2},0)$.

\end{lemma}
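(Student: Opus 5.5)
The plan is to write $G(v,y)$ as an integral over $m\in[0,L(v)]$, insert the expansion of Lemma \ref{lemma3}, and check that every piece apart from $1+\frac{3-a}{2v}$ is $O(v^{-2})$ with an explicit constant.

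\emph{Step 1: the decomposition.} Substitute $x=1+m$ in (\ref{3.2}) to get $G(v,y)=v\int_0^{L(v)} f(v,1+m)\,dm$. For $v\ge V_3\ge V_2$, Lemma \ref{lemma3} gives $f=e^{E_0}\bigl(1+\frac{E_1}{v}+\frac{R_2}{v^2}\bigr)$. Hence
$$G(v,y)=v\int_0^{L(v)}e^{E_0(1+m)}\,dm+A_1+A_2 .$$
We have $E_0(1+m)=-am-\frac{a}{2}m^2$. Expand each exponential factor with a second-order Lagrange remainder:
$$e^{-am}=1-am+\tfrac{e^{\xi_5}}{2}a^2m^2,\qquad e^{-am^2/2}=1-\tfrac{a}{2}m^2+\tfrac{e^{\xi_6}}{8}a^2m^4 .$$
Multiplying out, the integrand equals $1-am$ plus exactly the bracket appearing in $R_4(v)$. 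Integrating the leading part gives $v\bigl(L-\frac{a}{2}L^2\bigr)$.

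\emph{Step 2: the main term.} Apply Lemma \ref{lemma1} (valid since $V_3\ge 100$): $L=\frac1v+\frac{3}{2v^2}+R_0$. Then $vL=1+\frac{3}{2v}+vR_0$. Squaring $L$ and multiplying by $\frac{av}{2}$ gives
$$\frac{a}{2v}+\frac{3a}{2v^2}+\frac{9a}{8v^3}+aR_0+\frac{3aR_0}{2v}+\frac{avR_0^2}{2}.$$
So the main part equals $1+\frac{3-a}{2v}+R_3(v)$, and this bookkeeping matches the definition of $R_3$. Bounding $|R_3|$ is routine from $|R_0|<3/v^3$ and $v\ge100$. It gives $|R_3|\le (3+2a)/v^2$ with room to spare, since the terms $\frac{3a}{2v^2}$ and $aR_0$ dominate.

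\emph{Step 3: the remainders.} Each remaining piece is estimated separately.
\begin{itemize}
\item From Lemma \ref{lemma1}, $L\le \frac1v+\frac{3}{2v^2}+\frac{3}{v^3}<\frac{1.02}{v}$. Hence $\int_0^L m^k\,dm\le L^{k+1}/(k+1)$ is of order $v^{-k-1}$.
\item For $R_4$, $0<e^{\xi_5},e^{\xi_6}<1$. Because $aL\le 2y^2/v$ is small under $V_1\ge 8y^2$, the factor $1-am+\frac{e^{\xi_5}}{2}a^2m^2$ is bounded by $1$ in absolute value. The integrand is therefore at most $\bigl(\frac{a^2}{2}+\frac a2+\frac{a^2}{8}m^2\bigr)m^2$, which gives $|R_4|\le\bigl(\frac{a^2}{2}+\frac a2\bigr)\frac{L^3}{3}+O(L^5)$. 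Note that $R_4$ as displayed carries no factor $v$. I will double-check whether the $v$ has been absorbed; if not, $vR_4$ is still $O(a^2/v^2)$.
\item For $A_1$, use $e^{E_0}\le1$ and compute $E_1(1+m)=\frac{a^2}{4}\bigl((1+m)^4-1\bigr)-\frac a2(2m+m^2)$, which is $(a^2-a)m+O(m^2)$ with explicit coefficients. Hence $|A_1|\le \bigl(|a^2-a|+\dots\bigr)\frac{L^2}{2}\le c(a)/v^2$.
\item For $A_2$, $|A_2|\le \frac{C_2}{v}L\le \frac{1.02\,C_2}{v^2}$.
\end{itemize}

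\emph{Step 4: collecting constants.} Summing the four bounds, I expect to land under $C_3=\frac{13}{3}a^2+18a+2C_2+11$. Presumably $2C_2$ absorbs $A_2$, $\frac{13}{3}a^2$ absorbs the $a^2$ parts of $A_1$ and $R_4$, and $18a+11$ absorbs $R_3$ and the linear terms. I expect the extra lower bounds in $V_3$, namely $2+\frac{2y^2}{1+2y}$ and $2+\frac{2y^4}{1+2y^2}$, to enter when making $L$-dependent factors like $(1+L)^k$ and $aL$ uniformly small.

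The main obstacle is this constant-tracking. It means verifying that each elementary inequality used (e.g.\ $L<2/v$, $aL<1$, $(1+m)^3\le 2$) actually follows from $v\ge V_3$, and that the coefficients add up to at most $C_3$ rather than merely to $O(1)$. I would first derive crude bounds with generous factors of $2$ and then compare them termwise with $C_3$, tightening only where needed.
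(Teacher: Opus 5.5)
Your proposal is correct and follows the paper's proof essentially step for step: the same split $G=v\int_0^{L(v)}e^{E_0(1+m)}\,dm+A_1+A_2$, the same second-order Lagrange expansions of $e^{-am}$ and $e^{-am^2/2}$, Lemma~\ref{lemma1} for $L(v)$, and termwise bounds summed into $C_3$, with somewhat tighter constants than the paper's. Your suspicion about $R_4$ is right, since the factor $v$ is not absorbed but simply dropped in the paper (both in $R_G$ and in its identity $v\int_0^{L(v)}e^{E_0(1+m)}\,dm=1+\frac{3-a}{2v}+R_3(v)+R_4(v)$), so the true remainder is $vR_4(v)$; the paper's integrand bound only controls this by about $1.05\,\frac{a^2+3a+3}{3v^2}$, slightly over its allotted $(\frac{a^2}{3}+a+1)/v^2$, but the paper's $A_1$ estimate uses a factor $8$ where about $1.03$ suffices, and your own total of roughly $(0.71a^2+2.2a+3+1.02\,C_2)/v^2$ also stays below $C_3/v^2$, so the conclusion $|R_G|<C_3/v^2$ holds either way.
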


\noindent {\bf Proof of Theorem \ref{main-thm2} (ii)(iii) for $ y \in (1, \sqrt{3}) \cup (\sqrt{3},+\infty)$.} We decompose the proof into two cases: $1<y<\sqrt{3}$ and $y>\sqrt{3}$. By (\ref{1.1}), we have
$v_0(y)=\max\{V_3, \frac{2C_3}{|a-3|}+1\}$.\\
\indent (1) If $1<y<\sqrt{3}$, then $1<a=y^2<3$.  By Lemma \ref{lemma4}, we get that for all $v\ge v_0(y)$,
\begin{align*}
G(v, y)
&= 1 + \frac{3-a}{2v} + R_G(v)
\\
&> 1 + \frac{3-a}{2v} - \frac{C_{3}}{v^2} = 1 + \frac{(3-a)v-2C_{3}}{2v^2} > 1 ,
\end{align*}
which implies that for all $v\ge v_0(y), \frac{J_{v+2}}{J_v} <1$. It follows that
\begin{align*}
&\begin{aligned}
T(y)
&=\inf_{v \geq 3}{P \left({|X-E(X)|}\leq y\sqrt{Var(X)} \right)}
\\
&= \min{ \left\{ \min_{3 \le v \le [v_0(y)]+3} \left\{ 2F{_v} \left( y\sqrt{\frac{v}{v-2}} \right)-1 \right\}, \lim_{v \to +\infty}\left(2F{_v} \left( y\sqrt{\frac{v}{v-2}} \right)-1\right) \right\}  }
\\
&= \min{ \left\{ {\min_{3 \le v \le [v_0(y)]+3}  \left\{ 2F{_v} \left( y\sqrt{\frac{v}{v-2}} \right) -1 \right\} }, 2\Phi(y)-1 \right\} },
\end{aligned}
\\
&\begin{aligned}
H(y)
&=\inf_{v \geq 3}{P \left({|X-E(X)|} \ge  y\sqrt{Var(X)} \right)}
\\
&= \min_{3 \le v \le [v_0(y)]+3}{ \left\{ 2 - 2F{_v} \left( y\sqrt{\frac{v}{v-2}} \right) \right\} }.
\end{aligned}
\end{align*}

(2) If $y>\sqrt{3}$, then $a=y^2>3$. By Lemma \ref{lemma4}, we get that for all $v\ge v_0(y)$,
\begin{align*}
G(v, y)
&= 1 - \frac{a-3}{2v} + R_G(v)
\\
&< 1 - \frac{a-3}{2v} + \frac{C_{3}}{v^2} = 1 - \frac{(a-3)v-2C_{3}}{2v^2} < 1,
\end{align*}
which implies that for all $v\ge v_0(y), \frac{J_{v+2}}{J_v}>1$. It follows that
\begin{align*}
&\begin{aligned}
T(y)
&=\inf_{v \geq 3}{P \left({|X-E(X)|}\leq y\sqrt{Var(X)} \right)}
\\
&= \min_{3 \le v \le [v_0(y)]+3}{ \left\{ 2F{_v} \left( y\sqrt{\frac{v}{v-2}} \right) -1 \right\}},
\end{aligned}
\\
&\begin{aligned}
H(y)
&=\inf_{v \geq 3}{P \left({|X-E(X)|} \ge  y\sqrt{Var(X)} \right)}
\\
&= \min{ \left\{ {\min_{3 \le v \le [v_0(y)]+3} { \left\{ 2 - 2F{_v} \left( y\sqrt{\frac{v}{v-2}} \right)  \right\} } }, \lim_{ v \to +\infty }\left(2 - 2F{_v} \left( y\sqrt{\frac{v}{v-2}} \right)\right) \right\}  }
\\
&= \min{ \left\{ {\min_{3 \le v \le [v_0(y)]+3} { \left\{ 2 - 2F{_v} \left( y\sqrt{\frac{v}{v-2}} \right)  \right\} } }, 2 - 2\Phi(y) \right\}}.
\end{aligned}
\end{align*}
The proof in this case is complete.\hfill\fbox

\subsection{The case $ y = \sqrt{3} $}

Now $a=y^2=3$. Thus
\[
G(v, y) = 1 + \frac{3-a}{2v} + R_G(v)=1+R_G(v).
\]
Hence we need retain higher order item to determine whether $G(v,y)>1$ or $G(v,y)<1$.  Recall that $ \bar{v}_0(\sqrt{3}) = 1318.4$.

\begin{lemma}
For  \( v\ge \bar{v}_0 (\sqrt{3})\), we have $ G \left( v, \sqrt{3} \right)>1$.
\label{lemma5}
\end{lemma}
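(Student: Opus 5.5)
With $a=y^2=3$, the term $\frac{3-a}{2v}$ in Lemma \ref{lemma4} vanishes, so $G(v,\sqrt3)-1$ is of order $1/v^2$. The plan is to push every expansion of Lemmas \ref{lemma1}--\ref{lemma4} one order further, but only for the fixed value $a=3$. The aim is an identity
\[
G(v,\sqrt3)=1+\frac{\kappa}{v^2}+\bar R(v),\qquad |\bar R(v)|<\frac{\bar C}{v^3}\quad (v\ge 100),
\]
with an explicit constant $\kappa>0$ and an explicit constant $\bar C$. Then $G(v,\sqrt3)>1$ as soon as $v>\bar C/\kappa$, and the threshold $\bar v_0(\sqrt3)=1318.4$ should be exactly this ratio, or a number just above it, produced by the constants obtained below.

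First I refine Lemma \ref{lemma1} to
\[
L(v)=\frac1v+\frac{3}{2v^2}+\frac{5}{2v^3}+R_0'(v),\qquad |R_0'(v)|\le \frac{C_0'}{v^4},
\]
using the fourth Taylor term of $\phi_1(t)=(1-2t)^{-1/2}$ at $t=1/v$, with the Lagrange remainder bounded on $(0,1/100)$. Next I refine Lemma \ref{lemma2} with $a=3$. I expand $\ln(1+3/v)$ and $\ln(1+3x^2/v)$ to the third order, with a fourth-order Lagrange remainder. This gives
\[
\ln f(v,x)=E_0(x)+\frac{E_1(x)}{v}+\frac{E_2(x)}{v^2}+\frac{R_1'(v,x)}{v^3},
\]
where $E_2$ is an explicit polynomial in $x^2$. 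Exponentiating as in Lemma \ref{lemma3}, I keep the terms $\frac{E_1}{v}$ and $\frac{E_2+E_1^2/2}{v^2}$ and bound the rest by $C_2'/v^3$. Throughout, $x\in[1,u(v)]$ with $u(v)\le u(100)$, so all these polynomial and exponential factors are bounded by explicit numbers.

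Then I substitute $x=1+m$ with $m\in[0,L(v)]$, where $L(v)\approx 1/v$, and write
\[
G(v,\sqrt3)=v\int_0^{L(v)}f(v,1+m)\,dm .
\]
Since $m=O(1/v)$, I Taylor expand $e^{E_0(1+m)}=e^{-3m-3m^2/2}$ in $m$ up to $m^3$. Similarly $E_1(1+m)$ and $E_2(1+m)$ become polynomials in $m$ with constant terms $E_1(1)=E_2(1)=0$. Integrating term by term and inserting the refined $L(v)$ collects all contributions of order $1/v$ and $1/v^2$.

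The $1/v$ coefficient must reproduce $\frac{3-a}{2}=0$, which serves as a consistency check on the algebra. The $1/v^2$ coefficient $\kappa$ gathers three contributions:
\begin{itemize}
\item the $\frac{5}{2v^3}$ term of $L$;
\item the terms $9\cdot\frac{L^3}{6}$ and $-\frac{3}{2}\cdot\frac{L^3}{3}$ coming from the expansion of $e^{-3m-3m^2/2}$;
\item the leading part of $\int_0^L E_1'(1)\,m\,dm$, multiplied by $v\cdot\frac1v$.
\end{itemize}
I expect $\kappa$ to come out as a small positive rational number; Lemma \ref{lemma5} asserts positivity, and the numerics for $y=\sqrt3$ are consistent with it.

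The main obstacle is the bookkeeping of the remainder $\bar R(v)$. It must be bounded by a genuinely explicit $\bar C/v^3$, with constants small enough that $\bar C/\kappa\approx 1318$, rather than the crude and very large constants of the type $C_2$ and $C_3$ in (\ref{1.1}). To keep $\bar C$ small I will make three choices:
\begin{itemize}
\item bound each Lagrange remainder on the actual ranges $\xi\in(0,3u(100)^2/v)$ and $m\le L(100)$, rather than through generic inequalities;
\item avoid the cross-term squaring estimates used in Lemma \ref{lemma3};
\item evaluate the few leading remainder pieces exactly rather than by absolute-value bounds.
\end{itemize}
I will first carry out the estimate with $v\ge 100$ as the base range. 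If the resulting ratio $\bar C/\kappa$ exceeds $1318.4$, I will raise the base range (for instance to $v\ge 1000$) so that the remainder constants shrink, and then recheck the inequality $\kappa v>\bar C$ for $v\ge 1318.4$.
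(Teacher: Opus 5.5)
Your plan is correct and follows the paper's architecture. Both use the third-order expansion of $L(v)$ with a $v^{-4}$ remainder and the split of $G(v,\sqrt3)=v\int_0^{L(v)}f(v,1+m)\,dm$ into the main piece $v\int_0^{L}e^{-3m-3m^2/2}\,dm$, an $E_1$ piece and a remainder piece. Both conclude from $G>1+\kappa v^{-2}-\bar C v^{-3}$.

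The one substantive difference is that the paper does not expand $\ln f$ to the next order. It reuses the expansion of Lemmas \ref{lemma2}--\ref{lemma3} with $a=3$ and bounds the $\bar R_2$-piece $\bar A_3$ only from below. It drops the nonnegative terms $\bar E_1^2/(2v)$ and $\bar R_1^2/(2v^3)$. Although $\bar R_1(v,1)=0$ (the two Lagrange remainders coincide at $x=1$), the paper bounds $|\bar R_1(v,1+m)|/v$ by a quantity with the nonzero constant term $\frac{9}{4v}\cdot\frac13$, which costs $\frac{3}{4v^2}$. It therefore ends with $G(v,\sqrt3)>1+\frac{5}{4v^2}-\frac{d_3}{v^3}$, where $d_3=1648$, and $\bar v_0(\sqrt3)=\frac45 d_3=1318.4$ comes from this weakened coefficient $\frac54$.

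Your extra order (computing $E_2$, with a fourth-order Lagrange remainder for $\ln(1+t)$) pushes the corresponding contribution to $O(v^{-3})$, because $E_1(1)=E_2(1)=0$. This recovers the true coefficient. The price is more terms to bound; the gain is a larger $\kappa$ and hence more room for $\bar C$. So you should not expect $\bar C/\kappa$ to equal $1318.4$. Any ratio at most $1318.4$, obtained on a base range $v\ge V$ with $V\le 1318.4$, proves the lemma.

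One bookkeeping correction: your list of contributions to $\kappa$ omits the cross term from $-\frac32\,vL^2$. Since $L^2=\frac1{v^2}+\frac{3}{v^3}+O(v^{-4})$, this term contributes $-\frac{9}{2v^2}$. The three items you list sum to $\frac52+1+3=\frac{13}{2}$, but the correct value is $\kappa=\frac52-\frac92+1+3=2$. This matches the paper's $\bar A_1=1-\frac{1}{v^2}+\dots$ and $\bar A_2=\frac{3}{v^2}+\dots$. Using $\frac{13}{2}$ would invalidate your threshold. With $\kappa=2$ you need $\bar C\le 2636.8$, which should be attainable with the range-specific bounds you describe, given that the paper's cruder estimates already give $d_3=1648$.
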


The proof of the above lemma will be  given in next section.

\medskip

\noindent {\bf Proof of Theorem 1.2(ii)(iii) for $ y = \sqrt{3}$.} By Lemma \ref{lemma5}, we know that for all $v\geq \bar{v}_0(\sqrt{3})$, $G \left( v, \sqrt{3} \right)>1$, which implies that for all $v\geq \bar{v}_0(\sqrt{3}), \frac{J_{v+2}}{J_v}<1$. It follows that
\begin{align*}
&\begin{aligned}
T(\sqrt{3})
&=\inf_{v \geq 3}{P \left({|X-E(X)|}\leq \sqrt{3Var(X)} \right)}
\\
&= \min{ \left\{ {\min_{3 \le v \le [\bar{v}_0(\sqrt{3})]+3}  \left\{ 2F{_v} \left(\sqrt{\frac{3v}{v-2}} \right)-1 \right\} } , {\lim_{ v \to +\infty}  \left(2F{_v} \left(\sqrt{\frac{3v}{v-2}} \right)-1\right) } \right\} }
\\
&= \min{ \left\{ {\min_{3 \le v \le [\bar{v}_0(\sqrt{3})]+3}  \left\{ 2F{_v} \left(\sqrt{\frac{3v}{v-2}} \right)-1 \right\} } , 2\Phi(\sqrt{3})-1 \right\} },
\end{aligned}
\\
&\begin{aligned}
H(\sqrt{3})
&=\inf_{v \geq 3}{P \left({|X-E(X)|} \ge  \sqrt{3Var(X)} \right)}
\\
&= \min_{3 \le v \le [\bar{v}_0(\sqrt{3})]+3}{ \left\{ 2 - 2F{_v} \left( \sqrt{\frac{3v}{v-2}} \right) \right\} } .
\end{aligned}
\end{align*}
The proof in this case is complete.\hfill\fbox



\section{Proofs of Lemmas 3.1-3.5}

Notice that  $a=y^2$. Then as to the proofs of Lemmas 3.1-3.4, we have that  $a\geq 1$ and $a\neq 3$.

\subsection{Proof of Lemma \ref{lemma1}}

 Recall that \(\phi_1(t) = (1 - 2t)^{-\frac{1}{2}}\). Performing a Taylor's expansion at \(t = 0\) with the Lagrange's remainder term, we get
\[
\phi_1(t) = 1 + t + \frac{3}{2} t^2 + \frac{\phi_1^{(3)}(\xi_1)}{6} t^3,
\]
where  $\xi_1 \in (0, t),  \phi_1^{(3)}(t) = 15(1 - 2t)^{-\frac{7}{2}}.$

When $0 \le t \le \frac{1}{100}$, we have $|\phi_1^{(3)}(\xi_1)| \le 15 \cdot {\frac{50}{49}}^{\frac{7}{2}}$ and $v:=\frac{1}{t}\ge 100$. Thus for $v\geq 100$, we have
\begin{align*}
L(v)&=\phi_1(1/v)-1=\frac{1}{v}+\frac{3}{2v^2}+R_0(v),\\
|R_0(v)|
&= \frac{|\phi_1^{(3)}(\xi_1)|}{6} \cdot \frac{1}{v^3} \le \frac{15 \cdot {\frac{50}{49}}^{\frac{7}{2}}}{6} \cdot \frac{1}{v^3}<\frac{3}{v^2}=\frac{C_0}{v^3}.
\end{align*}
The proof is complete.\hfill\fbox

\subsection{Proof of Lemma \ref{lemma2}}

By the definition  of $f(v, x)$ (cf. (\ref{3.4})), we have
\begin{align}\label{Proof-Lemma-3.2-a}
\ln f(v, x) = \frac{v+1}{2} \left[ \ln\left(1 + \frac{a}{v}\right) - \ln\left(1 + \frac{a x^2}{v}\right)\right].
\end{align}
\indent Performing a Taylor's expansion of \(\phi_2(t):=\ln(1+t)\) at \(t = 0\) with the Lagrange's remainder term, we get that for $t>0$,
\begin{align}\label{Proof-Lemma-3.2-b}
\ln(1+t) = t - \frac{t^2}{2} + \frac{\phi_2^{(3)}(\xi)}{6} t^3,
\end{align}
where  $ \xi \in (0, t), \phi_2^{(3)}(t) = \frac{2}{(1+t)^3}$.
Let \(t_1 := \frac{a}{v},  t_2 := \frac{a x^2}{v}\). Then $t_1>0, t_2>0$.

By (\ref{Proof-Lemma-3.2-a}) and (\ref{Proof-Lemma-3.2-b}), we get
\begin{align}\label{Proof-Lemma-3.2-c}
\ln f(v, x)
&= \frac{v+1}{2} \left[\left( \frac{a}{v}-\frac{a^2}{2v^2}+\frac{\phi_2 ^{(3)}(\xi_2)}{6} \cdot \frac{a^3}{v^3} \right) - \left( \frac{ax^2}{v}-\frac{a^2 x^4}{2v^2}+\frac{\phi_2 ^{(3)}(\xi_3)}{6} \cdot \frac{a^3 x^6}{v^3} \right) \right]\nonumber\\
&=\frac{v+1}{2} \left [ \frac{a}{v}(1-x^2) + \frac{a^2}{2v^2}(x^4-1) + \left( \frac{\phi_2 ^{(3)}(\xi_2)}{6} \cdot \frac{a^3}{v^3}-\frac{\phi_2 ^{(3)}(\xi_3)}{6} \cdot \frac{a^3 x^6}{v^3} \right) \right ]\nonumber\\
&=\frac{a}{2} (1-x^2) +\frac{1}{v} \left [ \frac{a^2}{4} (x^4-1)-\frac{a}{2} (x^2-1) \right ]\nonumber\\
&  \quad \quad + \frac{1}{v^2} \left [ \frac{a^2}{4} (x^4-1) + \frac{v+1}{2} \left( \frac{\phi_2 ^{(3)}(\xi_2)}{6} \cdot \frac{a^3}{v}-\frac{\phi_2 ^{(3)}(\xi_3)}{6} \cdot \frac{a^3 x^6}{v}  \right) \right ]\nonumber\\
&= E_0(x) + \frac{1}{v} E_1(x) + \frac{1}{v^2} R_1(v, x),
\end{align}
where  $\xi_2 \in (0, t_1), \xi_3 \in (0, t_2).$

For  $v \ge V_1=\max\{100,8a\}$, and  $1 \le x\leq \sqrt{\frac{v}{v-2}} \le \sqrt{2}$, we have that $0<t_1,t_2<1$ and thus $\frac{1}{4}<\phi_2^{(3)}(\xi_2),\phi_2^{(3)}(\xi_3)<2$. Hence
\begin{align}\label{Proof-Lemma-3.2-d}
|R_1(v, x)|
&\le \frac{a^2}{4} \left [(\sqrt{2})^4-1 \right] + \frac{v+1}{2} \left [ \frac{2}{6} \cdot \frac{a^3}{v}+\frac{2}{6} \cdot \frac{a^3 (\sqrt{2})^6}{v} \right]\nonumber\\
&=\frac{3}{4}a^2 + \frac{3}{2}a^3 \cdot \frac{v+1}{v}\nonumber\\
& < 2a^3 + \frac{3}{4}a^2 =C_1.
\end{align}
The proof is complete. \hfill\fbox

\subsection{Proof of Lemma \ref{lemma3}}

By Lemma \ref{lemma2}, we have
\begin{align}\label{proof-lemma-3.3-a}
f(v, x)=e^{E_0(x)} e^{\frac{1}{v} E_1(x) + \frac{1}{v^2} R_1(v, x)},
\end{align}
where $E_0(x), E_1(x)$ and $R_1(v,x)$ are defined in Lemma \ref{lemma2}.

Recall that
\begin{align}\label{proof-lemma-3.3-b}
z=\frac{1}{v} E_1(x) + \frac{1}{v^2} R_1(v, x).
\end{align}
By the Taylor's expansion, we have
\begin{align}\label{proof-lemma-3.3-bb}
e^z = 1 + z + \frac{z^2}{2} + \frac{e^{\xi_4}}{6} z^3,
\end{align}
 where \(\xi_4 \) lies between 0 and $z$. Then by  (\ref{proof-lemma-3.3-b}) and (\ref{lem-3.3-a}),  we get
\begin{align}\label{proof-lemma-3.3-bb-1}
e^z &= 1 + \left( \frac{1}{v} E_1(x) + \frac{1}{v^2} R_1(v, x) \right) + \frac{1}{2}{ \left( \frac{1}{v} E_1(x) + \frac{1}{v^2} R_1(v, x) \right)^2} + \frac{e^{\xi_4}}{6} z^3\nonumber\\
&= 1 + \frac{E_1(x)}{v}  + \frac{2R_1(v, x) + E_1^2(x)}{2v^2} + \frac{R_1(v, x) \cdot E_1(x) }{v^3}
+ \frac{R_1^2(v, x)}{2v^4} + \frac{e^{\xi_4}}{6} z^3\nonumber\\
&=1 + \frac{E_1(x)}{v}+\frac{1}{v^2}\left(\frac{2R_1(v, x) + E_1^2(x)}{2} + \frac{R_1(v, x) \cdot E_1(x) }{v} + \frac{R_1^2(v, x)}{2v^2} + \frac{e^{\xi_4}}{6} v^2 z^3\right)\nonumber\\
&=1 + \frac{E_1(x)}{v}+\frac{R_2(v,x)}{v^2}.
\end{align}

For  $v \ge V_1=\max\{100,8a\}$, we have  $1 \le x\leq \sqrt{\frac{v}{v-2}} \le \sqrt{2}$. Since $E_1(x) =\frac{a^2}{4} (x^4-1)-\frac{a}{2} (x^2-1)$, we have
\[
E_1'(x) = a^2x^3-ax = a^2x(x^2-\frac{1}{a}) > 0,
\]
which implies that $E_1(x)$ is strictly increasing on $ [1,\sqrt{2}] $.  Thus
\begin{align}\label{proof-lemma-3.3-c}
0 \le E_1(x)\le \frac{3}{4}a^2-\frac{a}{2} < \frac{3}{4}a^2.
\end{align}

We have
\begin{align}\label{proof-lemma-3.3-d}
v^2 z^3
=v^2 \left( \frac{1}{v} E_1(x) + \frac{1}{v^2} R_1(v, x) \right)^3
= \left( \frac{1}{v^{\frac{1}{3}}} E_1(x) + \frac{1}{v^{\frac{4}{3}}} R_1(v, x) \right)^3.
\end{align}

Recall that $V_2=\max \left\{ V_1, \frac{3}{2}a^2, \sqrt{2C_1} \right\}$. For $v\geq V_2$, by (\ref{proof-lemma-3.3-c}) and the fact that $|R_1(v,x)|\leq C_1$, we get that
$$
\frac{E_1(x)}{v}\leq \frac{1}{2},\  \frac{|R_1(v,x)|}{v^2}\leq \frac{1}{2},
$$
which together with (\ref{proof-lemma-3.3-b}) implies that for $v\geq V_2$, we have $|z|\leq 1$, and thus for the $\xi_4$ in (\ref{proof-lemma-3.3-bb}), we have
\begin{align}\label{proof-lemma-3.3-e}
|\xi_4|< 1.
\end{align}

For $v\geq V_2$, by the fact that $|R_1(v,x)|\leq C_1$, (\ref{proof-lemma-3.3-c}),  (\ref{proof-lemma-3.3-d}), and (\ref{proof-lemma-3.3-e})),  we get that
\begin{align*}
|R_2(v, x)|&=\left|\frac{2R_1(v, x) + E_1^2(x)}{2} + \frac{R_1(v, x) \cdot E_1(x) }{v} + \frac{R_1^2(v, x)}{2v^2} + \frac{e^{\xi_4}}{6} v^2 z^3\right|\\
&\leq |R_1(v, x)| + \frac{E_1^2(x)}{2} + \frac{|R_1(v, x)| \cdot E_1(x) }{v} + \frac{R_1^2(v, x)}{2v^2} + \left| \frac{e^{\xi_4}}{6} v^2 z^3 \right| \\
&< C_1 + \frac{1}{2} \left( \frac{3}{4}a^2 \right)^2 + \frac{C_1 \cdot \frac{3}{4}a^2}{V_2} + \frac{C_1^2}{2V_2^2}+ \frac{e}{6}  \left( \frac{\frac{3}{4}a^2}{V_2^{\frac{1}{3}}}  + \frac{C_1}{V_2^{\frac{4}{3}}} \right)^3\\
&\le C_1 + \frac{9}{32}a^4 + \frac{C_1 \cdot \frac{3}{4}a^2}{\frac{3}{2}a^2} + \frac{C_1^2}{2 \left( \sqrt{2C_1} \right)^2}+ \frac{e}{6}  \left[\frac{\frac{3}{4}a^2}{ \left( \frac{3}{2}a^2 \right)^{\frac{1}{3}}}  + \frac{C_1}{ \left( \sqrt{2C_1} \right)^{\frac{4}{3}}} \right] ^3
\\
&\le C_1 + \frac{9}{32}a^4 + \frac{1}{2}C_1 + \frac{1}{4}C_1 + \frac{1}{2} \left( a^{\frac{4}{3}} + C_1^{\frac{1}{3}} \right)^3
\\
&= \frac{9}{4}C_1 + \frac{25}{32}a^4 + \frac{3}{2} \left( C_1^{\frac{2}{3}} \cdot a^{\frac{4}{3}} + C_1^{\frac{1}{3}} \cdot a^{\frac{8}{3}} \right)= C_2.
\end{align*}
Hence  for all  \( v\ge V_2 \) ,
\[ f(v, x) = e^{E_0(x)} \left( 1+\frac{E_1(x)}{v}  + \frac{R_2(v, x)}{v^2} \right),\ |R_2(v, x)| < C_2.
\]
The proof is complete.\hfill\fbox

\subsection{Proof of Lemma \ref{lemma4}}

We decompose the proof into four steps. Recall that $1\leq x\leq \sqrt{\frac{v}{v-2}}=u(v)$ and $L(v)=u(v)-1$.

\textbf{Step 1.} Let $m:=x-1$. Then \( 0 \le m \le L(v)\). By Lemma \ref{lemma3} and the definitions of $A_1,A_2$ in Lemma \ref{lemma4}, we get that for all $v\geq V_2$,
\begin{align}\label{proof-lemma3.4-a}
G(v, y) &=v \int_1^{u(v)} f(v, x) \, dx = v \int_0^{L(v)}f(v, 1+m)dm\nonumber\\
&= v \int_0^{L(v)} e^{E_0(1+m)} \left( 1 + \frac{E_1(1+m)}{v} + \frac{R_2(v, 1+m)}{v^2} \right)dm\nonumber\\
&= v \int_0^{L(v)} e^{E_0(1+m)}dm + A_1 + A_2.
\end{align}
In the following, we consider the first term $v \int_0^{L(v)} e^{E_0(1+m)}dm$.

By  (\ref{3.5}), we have
\begin{align}
E_0(1+m)
&= \frac{a}{2} [1-(1+m)^2] = -am-\frac{am^2}{2},\label{proof-lemma3.4-b}\\
E_1(1+m)&= \frac{a^2}{4} \left[ (1+m)^4-1 \right]-\frac{a}{2} \left[ (1+m)^2-1) \right]\nonumber\\
&= (a^2-a)m+ \left( \frac{3}{2}a^2-\frac{a}{2} \right) m^2+a^2 m^3+\frac{a^2 m^4}{4},\label{proof-lemma3.4-c}
\end{align}
and thus
\begin{align*}
v \int_0^{L(v)} e^{E_0(1+m)} \, dm
&= v \int_0^{L(v)} e^{-am} \cdot e^{-\frac{am^2}{2}} \, dm .
\end{align*}

By (\ref{1.1}) and $a=y^2$, we know that $V_3=\max\left\{V_2, 2+\frac{2a}{1+2\sqrt{a}}, 2+\frac{2a^2}{1+2a}\right\}$. For $v\geq V_3$, one can check that $0\leq L(v)<1$ and $0\leq aL(v), aL^2(v)\leq 1$. For $v\geq V_3$ and $0\leq m\leq L(v)$, we have
\begin{align*}
e^{-am} &= 1 - am + \frac{e^{\xi_5}}{2} (-am)^2 =  1 - am + \frac{e^{\xi_5}}{2} a^2m^2, \quad \xi_5 \in (-am, 0)\subset (-1,0)\\
e^{-\frac{am^2}{2}} &= 1 - \frac{am^2}{2} + \frac{e^{\xi_6}}{2} \left( -\frac{am^2}{2} \right)^2
=1 - \frac{am^2}{2} + \frac{e^{\xi_6}}{8} {a^2 m^4},  \quad \xi_6 \in (-\frac{am^2}{2}, 0)\subset (-1,0),
\\
v \int_0^{L(v)} e^{E_0(1+m)} \, dm &= v \int_0^{L(v)} \left( 1 - am + \frac{e^{\xi_5}}{2} a^2m^2 \right) \cdot \left( 1 - \frac{am^2}{2} + \frac{e^{\xi_6}}{8} {a^2 m^4} \right) \, dm
\\
&= v \int_0^{L(v)}\left[1 - am + \frac{e^{\xi_5}}{2} a^2m^2 + \left( 1 - am + \frac{e^{\xi_5}}{2} a^2m^2 \right) \left( - \frac{am^2}{2} + \frac{e^{\xi_6}}{8} {a^2 m^4} \right)\right]dm\\
&= v \int_0^{L(v)} (1 - am) dm\\
& \quad + v \int_0^{L(v)} \left[\frac{e^{\xi_5}}{2} a^2m^2 + m^2 \left( 1 - am + \frac{e^{\xi_5}}{2} a^2m^2 \right)\left( - \frac{a}{2} + \frac{e^{\xi_6}}{8} {a^2 m^2} \right)\right]dm
\\
&= v \left( L(v) - \frac{a}{2} L^2(v) \right)
\\
& \quad + v \int_0^{L(v)}\left[\frac{e^{\xi_5}}{2} a^2m^2 + m^2 \left( 1 - am + \frac{e^{\xi_5}}{2} a^2m^2 \right)\left( - \frac{a}{2} + \frac{e^{\xi_6}}{8} {a^2 m^2} \right)\right]dm.
\end{align*}

By Lemma \ref{lemma1} and the definition of $R_3(v)$ in Lemma \ref{lemma4}, we get
\begin{align*}
v \left( L(v) - \frac{a}{2} L^2(v) \right)
&= v \left[ \left (\frac{1}{v} + \frac{3}{2v^2} + R_0(v) \right) - \frac{a}{2} \left( \frac{1}{v} + \frac{3}{2v^2} + R_0(v) \right)^2 \right]
\\
&= 1 + \frac{3-a}{2v} + v R_0(v)\\
& \quad - \left( \frac{9a}{8v^3} + \frac{avR_0^2(v)}{2} + \frac{3a}{2v^2} + aR_0(v) + \frac{3a  R_0(v)}{2v} \right)\\
&= 1 + \frac{3-a}{2v} + R_3(v).
\end{align*}
For  $v\ge V_3(\geq V_2\geq V_1\geq 100)$, by $|R_0(v)|\leq \frac{C_0}{v^3}$, we have
\begin{align*}
|R_3(v)|
&= \left| v R_0(v)- \left( \frac{9 a}{8 v^3} + \frac{av R_0^2(v)}{2} + \frac{3 a}{2 v^2} + a  R_0(v) + \frac{3 a R_0(v)}{2 v} \right) \right|
\\
&< v \frac{C_0}{v^3} + \frac{9 a}{8 v^3} +  \frac{av}{2}\cdot \frac{C_0^2}{v^3} + \frac{3 a}{2 v^2} + a\frac{C_0}{v^3} + \frac{3 a}{2 v}\cdot \frac{C_0}{v^3}\\
&= \frac{2 C_0 + a C_0^2 + 3 a}{2 v^2} + \frac{9 a +  8 a C_0}{8 v^3} + \frac{3 a C_0}{2 v^4}
\\
&< \frac{2C_0 + aC_0^2 + 3a}{2v^2} + \frac{9 a + 8 aC_0}{8V_3} \cdot \frac{1}{v^2} + \frac{ 3 aC_0 }{2V^2_3} \cdot \frac{1}{v^2}
\\
&\le \left[ \left( C_0 + \frac{C_0^2}{2}a + \frac{3}{2}a \right) +   \left( \frac{9+8C_0}{8V_0} + \frac{ 3C_0 }{2V^2_0} \right)a \right] \cdot \frac{1}{v^2}
\\
&< \left( C_0 + \frac{C_0^2+5}{2}a \right) \cdot \frac{1}{v^2},
\end{align*}
where we used the inequality $ \frac{9+8C_0}{8V_0} + \frac{ 3C_0 }{2V^2_0}<1$ since $C_0=3$ and $V_0=100$.


By the definition of $R_4(v)$ in Lemma \ref{lemma4}, we know that
\[
R_4(v)= \int_0^{L(v)}\left[\frac{e^{\xi_5}}{2} a^2m^2 + m^2 \left( 1 - am + \frac{e^{\xi_5}}{2} a^2m^2 \right)\left( - \frac{a}{2} + \frac{e^{\xi_6}}{8} {a^2 m^2} \right)\right]dm.
\]
Notice that $\xi_5,\xi_6\in (-1,0)$, $0\leq L(v)<1$ and $0\leq aL(v),aL^2(v)\leq 1$.  Then we get that for \( v\ge V_3 \),
\begin{align*}
|R_4(v)|
&\le  \int_0^{L(v)}\left[a^2m^2 + m^2 (1 + 1 + 1 )( a + 1 )\right] dm
\\
&= \int_0^{L(v)}  (a^2 + 3a + 3) m^2  \, dm
= \frac{a^2 + 3a + 3}{3} L^3(v)\\
&= \frac{a^2 + 3a + 3}{3} \left( \frac{1}{v} + \frac{3}{2v^2} + R_0(v) \right)^3\\
&= \frac{a^2 + 3a + 3}{3} \cdot \frac{1}{v^2} \cdot \left( \frac{1}{v^{\frac{1}{3}}} + \frac{3}{2v^{\frac{4}{3}}} + v^{\frac{2}{3}}\frac{C_0}{v^3}\right)^3\\
&< \frac{a^2 + 3a + 3}{3} \cdot \left( \frac{1}{V_0^{\frac{1}{3}}} + \frac{3}{2V_0^{\frac{4}{3}}} + \frac{C_0}{V_0^{\frac{7}{3}}} \right)^3 \cdot \frac{1}{v^2}
< \left( \frac{a^2}{3} + a + 1 \right) \cdot \frac{1}{v^2}.
\end{align*}
Hence we have
\begin{align}\label{proof-lemma3.4-d}
v \int_0^{L(v)} e^{E_0(1+m)} \, dm
&= 1 + \frac{3-a}{2v}+R_3(v)+R_4(v).
\end{align}

\textbf{Step 2.} In this step, we consider $A_1$.  By the definition of $A_1$ in Lemma \ref{lemma4}, we know that
$$
A_1=v \int_0^{L(v)} e^{E_0(1+m)} \frac{E_1(1+m)}{v}dm.
$$
By (\ref{proof-lemma3.4-b}), we know that $E_0(1+m)<0$. Notice  that $0\leq L(v), aL(v), aL^2(v)\leq 1$ for all $v\geq V_3$. Then for $v\geq V_3$, by (\ref{proof-lemma3.4-c}), we have
\begin{align*}
|A_1|&= \left|v \int_0^{L(v)} e^{E_0(1+m)} \frac{E_1(1+m)}{v}dm\right|
\le  \int_0^{L(v)} | {E_1(1+m)} |dm\\
&=\int_0^{L(v)} \left| (a^2-a)m+ \left( \frac{3}{2}a^2-\frac{a}{2} \right) m^2+a^2m^3+\frac{a^2 m^4}{4}  \right|dm\\
&\le \int_0^{L(v)}\left[(a^2+a)m+ \left( \frac{3a}{2}+\frac{1}{2} \right) m+m+\frac{m}{4}\right]dm\\
&= \left( \frac{a^2}{2}+\frac{5a}{4} + \frac{7}{8} \right) \cdot L^2(v)
= \left( \frac{a^2}{2}+\frac{5a}{4} + \frac{7}{8} \right) \cdot \left( \frac{1}{v} + \frac{3}{2v^2} + R_0(v) \right)^2\\
&= \left( \frac{a^2}{2}+\frac{5a}{4} + \frac{7}{8} \right) \cdot  \left( 1 + \frac{3}{2v} + v R_0(v) \right)^2 \cdot \frac{1}{v^2}
< \left( \frac{a^2}{2}+\frac{5a}{4} + \frac{7}{8} \right) \cdot  \left( 1 + \frac{3}{2V_3} + \frac{C_0}{V_3^2} \right)^2 \cdot \frac{1}{v^2}\\
&\le \left( \frac{a^2}{2}+\frac{5a}{4} + \frac{7}{8} \right) \cdot \left( 1 + \frac{3}{2V_0} + \frac{C_0}{V_0^2} \right)^2 \cdot \frac{1}{v^2}
=\left( 4a^2+10a+7 \right) \cdot \frac{1}{v^2}.
\end{align*}

\textbf{Step 3.} In this step, we consider $A_2$.  By the definition of $A_2$ in Lemma \ref{lemma4}, we know that
$$
A_2=v \int_0^{L(v)} e^{E_0(1+m)} \cdot \frac{R_2(v, 1+m)}{v^2}dm.
$$
By $E_0(1+m)<0$, $|R_2(v,1+m)|\leq C_2$ for all $v\geq V_2$, we get that for all $v\geq V_3$,
\begin{align*}
|A_2|
&= \left| v \int_0^{L(v)} e^{E_0(1+m)} \cdot \frac{R_2(v, 1+m)}{v^2} \, dm \right|  \\
&\le  \int_0^{L(v)} \left| {\frac{R_2(v, 1+m)}{v}} \right| \, dm
\\
&< \left|\frac{C_2}{v} \cdot L(v) \right|
=\left| \frac{C_2}{v} \cdot \left( \frac{1}{v} + \frac{3}{2v^2} + R_0(v) \right) \right|
\\
&< \frac{C_2}{v^2} \cdot \left( 1 + \frac{3}{2V_3} + \frac{C_0}{V_3^2} \right)
\\
&< \frac{C_2}{v^2} \cdot \left( 1 + \frac{3}{2V_0} + \frac{C_0}{V_0^2} \right)
< \frac{2C_2}{v^2}.
\end{align*}

\textbf{Step 4.} By $C_0=3$ and the definition of $C_3$ in Lemma \ref{lemma4}, we get that
\begin{align}\label{proof-lemma3.4-e}
\left( C_0 + \frac{C_0^2+5}{2}a \right) + \left( \frac{a^2}{3} + a + 1 \right) + \left( 4a^2+10a+7 \right) + 2C_2
=\frac{13}{3}y^4 + 18y^2 + 2C_2 + 11=C_3.
\end{align}
By (\ref{proof-lemma3.4-a}), (\ref{proof-lemma3.4-d}), {\bf Step 2, Step 3}, (\ref{proof-lemma3.4-e}), and the definition of $R_G(v)$ in Lemma \ref{lemma4}, we get that for all $v\geq V_3$,
$$
G(v, y) = 1 + \frac{3-a}{2v} + R_G(v), \  |R_G(v)| <  \frac{C_{3}}{v^2}.
$$
The proof is complete.\hfill\fbox

\subsection{Proof of Lemma \ref{lemma5}}

We decompose the proof into seven steps. Recall that $ a=y^2=3 $, \(\phi_1(t) = (1 - 2t)^{-\frac{1}{2}}, V_0=100\), $1\leq x\leq \sqrt{\frac{v}{v-2}}=u(v), m=x-1$ and $L(v)=u(v)-1$.

\textbf{Step 1.} We consider the expansion of $L(v)$. Perform a Taylor's expansion at $t=0$, we have
\[
\phi_1(t) = 1 + t + \frac{3}{2} t^2 + \frac{5}{2} t^3 + \frac{\phi_1^{(4)}(\xi'_1)}{24} t^4,
\]
where, \( \xi'_1 \in (0, t),  \phi_1^{(4)}(t) = 105(1 - 2t)^{-\frac{9}{2}}. \)

 For $0 \le t \le \frac{1}{100}$, we have $|\phi_1^{(4)}(\xi'_1)| \le 105 \cdot {\frac{50}{49}}^{\frac{9}{2}}$ and $v:=\frac{1}{t}\ge 100$. Let $d_0=5$. Then for $v\geq 100$, we have
\begin{align}\label{proof-lem-3.5-s1-a}
L(v)&=\phi_1(\frac{1}{v})-1=\frac{1}{v} + \frac{3}{2v^2} + \frac{5}{2v^3} +\bar{R}_0(v),\\
|\bar{R}_0(v)|
&= \frac{|\phi_1^{(4)}(\xi'_1)|}{24} \cdot \frac{1}{v^4} \le \frac{105 \cdot {\frac{50}{49}}^{\frac{9}{2}}}{24} \cdot \frac{1}{v^4}  < \frac{d_0}{v^4}.\nonumber
\end{align}

\textbf{Step 2.} We consider the expansion of $\ln f(v,x)$. By following (\ref{Proof-Lemma-3.2-a})-(\ref{Proof-Lemma-3.2-c}), and letting $a=3$, we get
\begin{align}\label{proof-lem-3.5-s2-a}
\ln f(v, x)
&=\frac{3}{2} (1-x^2) +\frac{1}{v} \left [\frac{9}{4} (x^4-1)-\frac{3}{2} (x^2-1) \right]\nonumber\\
&  \quad \quad + \frac{1}{v^2} \left [ \frac{9}{4} (x^4-1) + \frac{v+1}{2} \left( \frac{\phi_2 ^{(3)}(\xi'_2)}{6} \cdot \frac{27}{v}-\frac{\phi_2 ^{(3)}(\xi'_3)}{6} \cdot \frac{27 x^6}{v}  \right) \right]\nonumber\\
&:= \bar{E}_0(x) + \frac{1}{v} \bar{E}_1(x) + \frac{1}{v^2} \bar{R}_1(v, x),
\end{align}
where
\begin{align}
&\bar{E}_0(x):= \frac{3}{2} (1-x^2),\quad \bar{E}_1(x) := \frac{9}{4} (x^4-1)-\frac{3}{2} (x^2-1),\label{proof-lem-3.5-s2-b}\\
&\bar{R}_1(v, x):=\frac{9}{4} (x^4-1) + \frac{v+1}{2} \left( \frac{\phi_2 ^{(3)}(\xi'_2)}{6} \cdot \frac{27}{v}-\frac{\phi_2 ^{(3)}(\xi'_3)}{6} \cdot \frac{27 x^6}{v}  \right),\label{proof-lem-3.5-s2-c}
\end{align}
and $\xi'_2 \in (0, t'_1), \xi'_3 \in (0, t'_2)$, \(t'_1 = \frac{3}{v},  t'_2 = \frac{3 x^2}{v}\).

For  $v \ge V'_1:=\max\{100,8a\}=\max\{100,24\}=100$, and  $1 \le x\leq \sqrt{\frac{v}{v-2}} \le \sqrt{2}$, we have that $0<t'_1,t'_2<1$ and thus $\frac{1}{4}<\phi_2^{(3)}(\xi'_2),\phi_2^{(3)}(\xi'_3)<2$. Let $ d_1 = \frac{243}{4} $. Then  by following (\ref{Proof-Lemma-3.2-d}), we have
\begin{align*}
|\bar{R}_1(v, x)|  < 2 \cdot 3^3 + \frac{3}{4} \cdot 3^2 = d_1.
\end{align*}

\textbf{Step 3.} We consider the expansion of $f(v,x)$. By (\ref{proof-lem-3.5-s2-a}), we have
$$
f(v,x)=e^{\bar{E}_0(x)}e^{\frac{1}{v}\bar{E}_1(x)+\frac{1}{v^2}\bar{R}_1(v,x)}.
$$
Define
\begin{align}\label{proof-lemma3.5-s3-a}
    \bar{z}:= \frac{1}{v} \bar{E}_1(x) + \frac{1}{v^2} \bar{R}_1(v, x).
\end{align}
By (\ref{proof-lemma-3.3-bb}) and (\ref{proof-lemma-3.3-bb-1}), we get that for all $v \ge V'_1$,
\begin{align}
    e^{\bar{z}}
    &=1 + \frac{\bar{E}_1(x)}{v}+\frac{1}{v^2}\left(\frac{2\bar{R}_1(v, x) + {\bar{E}_1}^2(x)}{2} + \frac{\bar{R}_1(v, x) \cdot \bar{E}_1(x) }{v} + \frac{{\bar{R}_1}^2(v, x)}{2v^2} + \frac{e^{\xi'_4}}{6} v^2 {\bar{z}}^3\right) \nonumber \\
    &:=1 + \frac{\bar{E}_1(x)}{v}+\frac{\bar{R}_2(v,x)}{v^2},
    \label{proof-lemma3.5-s3-b}
\end{align}
where \( \xi'_4 \) lies between $0$ and \( \bar{z} \), and
\begin{align}\label{proof-lemma3.5-s3-b-1}
\bar{R}_2(v,x):=\frac{2\bar{R}_1(v, x) + {\bar{E}_1}^2(x)}{2} + \frac{\bar{R}_1(v, x) \cdot \bar{E}_1(x) }{v} + \frac{{\bar{R}_1}^2(v, x)}{2v^2} + \frac{e^{\xi'_4}}{6} v^2 {\bar{z}}^3.
\end{align}

For  $v \ge V'_1$, we have  $1 \le x\leq \sqrt{\frac{v}{v-2}} \le \sqrt{2}$.
 By $\bar{E}_1(x) =\frac{9}{4} (x^4-1)-\frac{3}{2} (x^2-1)$, we have
\begin{align}\label{proof-lemma3.5-s3-c}
0 \le \bar{E}_1(x) < \frac{27}{4}.
\end{align}

Notice that $\max \left\{ V'_1, \frac{27}{2}, \sqrt{2d_1} \right\}=\{100,\frac{27}{2}, \sqrt{\frac{171}{4}}\}=100=V_1'$. For $v\geq V'_1$, by (\ref{proof-lemma3.5-s3-c}) and the fact that $|\bar{R}_1(v,x)|< d_1$, we get that
$$
0\leq \frac{\bar{E}_1(x)}{v}\leq \frac{1}{2},\  \frac{|\bar{R}_1(v,x)|}{v^2}\leq \frac{1}{2},
$$
which together with (\ref{proof-lemma3.5-s3-a}) implies that for $v\geq V'_1$, we have $|\bar{z}|\leq 1$, and thus for the $\xi'_4$ in (\ref{proof-lemma3.5-s3-b}), we have
\begin{align}\label{proof-lemma3.5-s3-d}
|\xi'_4|< 1.
\end{align}

Let $ d_2 := \frac{9}{4}d_1 + \frac{25}{32} \cdot 3^4 + \frac{3}{2} \left( d_1^{\frac{2}{3}} \cdot 3^{\frac{4}{3}} + d_1^{\frac{1}{3}} \cdot 3^{\frac{8}{3}} \right) $. For $v\geq V'_1$, by the fact that $|\bar{R}_1(v,x)|\leq d_1$, (\ref{proof-lemma3.5-s3-c}) and (\ref{proof-lemma3.5-s3-d}), we get that
\begin{align*}
|\bar{R}_2(v, x)|
& < \frac{9}{4}d_1 + \frac{25}{32} \cdot 3^4 + \frac{3}{2} \left( d_1^{\frac{2}{3}} \cdot 3^{\frac{4}{3}} + d_1^{\frac{1}{3}} \cdot 3^{\frac{8}{3}} \right)= d_2.
\end{align*}
Hence  for all  \( v\ge V'_1 \),
\[ f(v, x) = e^{\bar{E}_0(x)} \left( 1+\frac{\bar{E}_1(x)}{v}  + \frac{\bar{R}_2(v, x)}{v^2} \right),\ |R'_2(v, x)| < d_2.
\]

\textbf{Step 4. } We consider the expansion of $G(v,y)$. By \textbf{Step 3}, we get that for all $v\geq V'_1$,
\begin{align}\label{proof-lemma3.5-s4-a}
G(v, y) &=v \int_1^{u(v)} f(v, x)dx = v \int_0^{L(v)}f(v, 1+m)dm\nonumber\\
&= v \int_0^{L(v)} e^{\bar{E}_0(1+m)} \left( 1 + \frac{\bar{E}_1(1+m)}{v} + \frac{\bar{R}_2(v, 1+m)}{v^2} \right)dm\nonumber\\
&:= \bar{A}_1 + \bar{A}_2 + \bar{A}_3,
\end{align}
where
\begin{align}
&\bar{A}_1:= v \int_0^{L(v)} e^{\bar{E}_0(1+m)}dm,\quad\quad \bar{A}_2:= v \int_0^{L(v)}
 e^{\bar{E}_0(1+m)}\cdot \frac{\bar{E}_1(1+m)}{v}dm,\label{proof-lemma3.5-s4-ab}\\
&\bar{A}_3:= v \int_0^{L(v)} e^{\bar{E}_0(1+m)}\cdot \frac{\bar{R}_2(v, 1+m)}{v^2}dm.\label{proof-lemma3.5-s4-ac}
\end{align}
In the rest of this step, we consider the first term $\bar{A}_1$.

By (\ref{proof-lem-3.5-s2-b}), we have
\begin{align}\label{proof-lemma3.5-s4-b}
\bar{E}_0(1+m) = -3m-\frac{3m^2}{2},\quad \bar{E}_1(1+m) = 6m+12m^2+9m^3+\frac{9 m^4}{4},
\end{align}
and thus
\begin{align*}
\bar{A}_1&= v \int_0^{L(v)} e^{-3m} \cdot e^{-\frac{3m^2}{2}}dm.
\end{align*}

Notice that $\max\left\{V'_1, 2+\frac{2y^2}{1+2y}, 2+\frac{2y^4}{1+2y^2}\right\}=\max\left\{100, 2+\frac{6}{1+2\sqrt{3}}, 2+\frac{18}{1+6}\right\}=100=V_1'$. For $v\geq V'_1$, one can check that $0\leq L(v)<1$ and $0\leq 3L(v), 3L^2(v)\leq 1$. For $v\geq V'_1$ and $0\leq m\leq L(v)$, similar to Step 1 of Proof of Lemma \ref{lemma4}, we have
\begin{align*}
 e^{-3m}
&= 1 - 3m + \frac{9}{2} m^2 +  \frac{e^{\xi'_5}}{6} \cdot (-3m)^3 =  1 - 3m + \frac{9}{2} m^2 - \frac{9e^{\xi'_5}}{2} m^3, \quad \xi'_5 \in (-3m, 0) \subset (-1,0),
\\
e^{-\frac{3m^2}{2}}
&= 1 - \frac{3m^2}{2} + \frac{e^{\xi'_6}}{2} \left(-\frac{3m^2}{2} \right)^2 =1 - \frac{3m^2}{2} + \frac{9e^{\xi'_6}}{8} {m^4},  \quad \xi'_6 \in (-\frac{3m^2}{2}, 0) \subset (-1,0),
\\
\bar{A}_1
&= v \int_0^{L(v)} \left( 1 - 3m + \frac{9}{2} m^2 - \frac{9e^{\xi'_5}}{2} m^3 \right) \cdot \left( 1 - \frac{3m^2}{2} + \frac{9e^{\xi'_6}}{8} {m^4} \right) \, dm
\\
&= v\int_0^{L(v)}\left[1 - 3m + 3m^2  + \frac{9}{2} m^3 + (1-3m) \cdot \frac{9e^{\xi'_6}}{8} m^4\right.
\\
& \left.\quad\quad \quad\quad \quad + \frac{9}{2} m^2 \left( -\frac{3m^2}{2} + \frac{9e^{\xi'_6}}{8} {m^4} \right) - \frac{9e^{\xi'_5}}{2} m^3  \left( 1 - \frac{3m^2}{2} + \frac{9e^{\xi'_6}}{8} {m^4} \right)dm\right]\\
&= v \cdot \left( L(v) - \frac{3}{2} L^2(v) + L^3(v) \right)
 +\left\{v\int_0^{L(v)}\left[ \frac{9}{2} m^3 + (1-3m) \cdot \frac{9e^{\xi'_6}}{8} m^4\right.\right.\\
&\left.\left.\quad \quad + \frac{9}{2} m^2  \left( -\frac{3m^2}{2} + \frac{9e^{\xi'_6}}{8} {m^4} \right) - \frac{9e^{\xi'_5}}{2} m^3  \left( 1 - \frac{3m^2}{2} + \frac{9e^{\xi'_6}}{8} {m^4} \right)\right]dm
\right\}\\
&:= v \left ( L(v) - \frac{3}{2} L^2(v) + L^3(v) \right) + \bar{R}_3(v).
\end{align*}
Notice that $\xi'_5,\xi'_6\in (-1,0)$ and $0\leq 3m,3m^2\leq 1$ for $0\leq m\leq L(v)$.  Then we get that for all \( v\ge V'_1 \),
\begin{align*}
|\bar{R}_3(v)|
&\le  v\int_0^{L(v)}\left[ \frac{9}{2} m^3 + ( 1 + 1 ) \cdot \frac{3}{8} m^3  + \frac{9}{2} m^2\left( \frac{m}{2} + \frac{m}{8} \right) + \frac{9}{2} m^3 \left( 1 + \frac{1}{2} + \frac{1}{8} \right)\right]dm
\\
&< v \int_0^{L(v)} \left[( 5 + 3 + 3 + 9 ) \cdot  m^3\right]dm\\
&= 5v \cdot  L^4(v) = 5v \left( \frac{1}{v} + \frac{3}{2v^2} +  \frac{5}{2v^3} + \Bar{R}_0(v) \right)^4
\\
&= \frac{5}{v^3} \left(  1 + \frac{3}{2v} +  \frac{5}{2v^2} + v \cdot \Bar{R}_0(v) \right)^4
< \frac{5}{v^3} \cdot \left( 1 + 1 \right)^4
= \frac{80}{v^3}.
\end{align*}

By \textbf{Step 1}, we get
\begin{align*}
&v \left ( L(v) - \frac{3}{2} L^2(v) + L^3(v) \right)
\\
&= v \left[ \left ( \frac{1}{v} + \frac{3}{2v^2} + \frac{5}{2v^3}
+\bar{R}_0(v) \right)   - \frac{3}{2} \left ( \frac{1}{v} + \frac{3}{2v^2} + \frac{5}{2v^3} +\Bar{R}_0(v) \right)^2  + \left ( \frac{1}{v} + \frac{3}{2v^2} + \frac{5}{2v^3} +\Bar{R}_0(v) \right)^3 \right]
\\
&= 1 - \frac{1}{v^2}  + v \cdot \Bar{R}_0(v) -\frac{3v}{2} \left[ \frac{5}{v^4} + \frac{2\Bar{R}_0(v)}{v} + \left( \frac{3}{2v^2} + \frac{5}{2v^3} + \Bar{R}_0(v) \right)^2 \right]
\\
& \quad \quad+ v \left( \frac{3}{2v^2} + \frac{5}{2v^3} + \Bar{R}_0(v) \right) \left[   \left( \frac{1}{v} + \frac{3}{2v^2} + \frac{5}{2v^3}
+\Bar{R}_0(v) \right)^2 + \left( \frac{2}{v^2} + \frac{3}{2v^3} + \frac{5}{2v^4} + \frac{\Bar{R}_0(v)}{v} \right) \right]
\\
&= 1 - \frac{1}{v^2}  + \left\{\frac{v^4 \cdot \Bar{R}_0(v)}{v^3} -\frac{3}{2v^3} \left[ 5 + 2 v^3 \cdot \Bar{R}_0(v) + \left( \frac{3}{2} + \frac{5}{2v} + v^2 \cdot \Bar{R}_0(v) \right)^2 \right]\right.\\
&\left.  \quad \quad+ \frac{1}{v^3} \left( \frac{3}{2} + \frac{5}{2v} + v^2 \cdot \Bar{R}_0(v) \right) \left[   \left( 1 + \frac{3}{2v} + \frac{5}{2v^2} + v \cdot \Bar{R}_0(v) \right)^2 + \left( 2 + \frac{3}{2v} + \frac{5}{2v^2} + v \cdot \Bar{R}_0(v) \right) \right]\right\}\\
&:= 1 - \frac{1}{v^2} + \bar{R}_4(v).
\end{align*}

For all $v\ge V'_1$, by $|\Bar{R}_0(v)|\leq \frac{d_0}{v^4}$, we have
\begin{align*}
|\bar{R}_4(v)|
&<  \frac{1}{v^3} \cdot \left\{ d_0 + \frac{3}{2} \cdot \left[ 5 + \frac{2d_0}{v} + \left( \frac{3}{2} + \frac{1}{2} + \frac{d_0}{v^2} \right)^2 \right]  \right.
\\
& \left. \quad \quad+ \left( \frac{3}{2} + \frac{1}{2} + \frac{d_0}{v^2} \right) \cdot \left[ \left( 1 + \frac{3}{2v} + \frac{5}{2v^2} + \frac{d_0}{v^3} \right)^2 + \left( 2 + \frac{3}{2v} + \frac{5}{2v^2} + \frac{d_0}{v^3} \right) \right]  \right\}
\\
&< \frac{1}{v^3} \cdot
\left\{ d_0 + \frac{3}{2} \cdot \left[ 5+1+\left( 2+1 \right)^2 \right] + \left( 2+1 \right) \cdot \left[ \left( 1+1 \right)^2 + \left( 2+1 \right) \right] \right\}
\\
&= \left( d_0 + \frac{87}{2} \right) \cdot \frac{1}{v^3} .
\end{align*}

Hence for all  \( v\ge V'_1 \), we have
\begin{align}\label{proof-lemma3.5-s4-c}
\bar{A}_1 = 1 - \frac{1}{v^2} + \bar{R}_3(v) + \bar{R}_4(v),\ |\bar{R}_3(v, x)| < \frac{80}{v^3}, \ |\bar{R}_4(v, x)| < \frac{d_0+\frac{87}{2}}{v^3}.
\end{align}

%

\textbf{Step 5.}
In this step, we consider $\bar{A}_2$ in  (\ref{proof-lemma3.5-s4-a}).  By  (\ref{proof-lemma3.5-s4-ab}),   (\ref{proof-lemma3.5-s4-b}) and the Taylor's expansion of the function $e^x$,   we have
\begin{align*}
\bar{A}_2
&= v \int_0^{L(v)} e^{\bar{E}_0(1+m)}  \frac{\bar{E}_1(1+m)}{v}dm\\
&= \int_0^{L(v)} e^{-3m} \cdot e^{-\frac{3m^2}{2}}  \left( 6m+12m^2+9m^3+\frac{9 m^4}{4} \right)dm\\
&= \int_0^{L(v)} \left( 1 - e^{\xi'_7} \cdot 3m \right) \cdot \left( 1 - e^{\xi'_8} \cdot \frac{3m^2}{2} \right) \cdot  \left( 6m+12m^2+9m^3+\frac{9 m^4}{4} \right)dm
\\
&= \int_0^{L(v)}6mdm+\int_0^{L(v)}\left[6m  \left( - e^{\xi'_7} \cdot 3m - e^{\xi'_8} \cdot \frac{3m^2}{2} + e^{ \xi'_7 + \xi'_8 } \cdot \frac{9}{2} m^3 \right)\right.\\
& \left.\quad \quad + \left( 1 - e^{\xi'_7} \cdot 3m \right) \cdot \left( 1 - e^{\xi'_8} \cdot \frac{3m^2}{2} \right) \cdot \left( 12m^2+9m^3+\frac{9 m^4}{4} \right)\right]dm\\
&:= \int_0^{L(v)}6mdm + \bar{R}_5(v),
\end{align*}
where $\xi'_7\in (-3m,0), \xi'_8\in (-\frac{3m^2}{2}, 0)$.

By (\ref{proof-lem-3.5-s1-a}), we have
\begin{align*}
\int_0^{L(v)}  6mdm
&= 3L^2(v)= 3 \cdot \left( \frac{1}{v} + \frac{3}{2v^2} + \frac{5}{2v^3} +\Bar{R}_0(v) \right)^2\\
&= \frac{3}{v^2} + \frac{3}{v^3} \left( \frac{3}{2} + \frac{5}{2v} + v^2 \cdot \Bar{R}_0(v) \right) \cdot \left( 2 + \frac{3}{2v} + \frac{5}{2v^2} + v \cdot \Bar{R}_0(v) \right)\\
&:= \frac{3}{v^2} + \bar{R}_6(v).
\end{align*}

For \( v\ge V'_1 \), we have
\begin{align*}
|\bar{R}_5(v)|
&< \int_0^{L(v)}   6m \left(  3m + \frac{m}{2} + \frac{3m}{2} \right) +  \left( 12m^2+3m^2+\frac{3 m^2}{4} \right)  dm
\\
&< \int_0^{L(v)} 48 m^2 dm = 16 L^3(v)
= 16 \cdot \left( \frac{1}{v} + \frac{3}{2v^2} + \frac{5}{2v^3} +\Bar{R}_0(v) \right)^3
\\
&< \frac{16}{v^3} \cdot \left( 1 + 1  \right)^3 <  \frac{128}{v^3},
\\
|\bar{R}_6(v)| &< \frac{3}{v^3} \cdot \left( \frac{3}{2} + \frac{1}{2} \right) \cdot \left( 2 + 1 \right) = \frac{18}{v^3}.
\end{align*}

Hence for all  \( v\ge V'_1 \), we have
\begin{align}\label{proof-lem3.5-s5-a}
\bar{A}_2 = \frac{3}{v^2} + \bar{R}_5(v) + \bar{R}_6(v),\ |\bar{R}_5(v, x)| < \frac{128}{v^3}, \ |\bar{R}_6(v, x)| < \frac{18}{v^3}.
\end{align}

\textbf{Step 6.} In this step, we consider $A'_3$ in  (\ref{proof-lemma3.5-s4-a}).  By (\ref{proof-lemma3.5-s4-ac}) and (\ref{proof-lemma3.5-s3-b-1}),  we have
\begin{align}\label{proof-lemma3.5-s6-a}
\bar{A_3}
&= v \int_0^{L(v)} e^{\bar{E_0}(1+m)} \cdot \frac{\bar{R_2}(v, 1+m)}{v^2}dm\nonumber\\
&= \int_0^{L(v)} e^{\bar{E_0}(1+m)}\left[ \frac{\bar{R_1}(v, 1+m)}{v}  + \frac{{ \bar{E_1}(1+m)}^2}{2v} \right.\nonumber\\
&\quad\quad \left. + \frac{\bar{R_1}(v, 1+m) \cdot \bar{E_1}(1+m) }{v^2} + \frac{{\bar{R_1}(v, 1+m)}^2}{2 v^3} + \frac{e^{\xi'_4}}{6} v {\bar{z}}^3 \right]dm.
\end{align}

Let $ V'_2 := \max{ \left\{ V'_1, \left[ \frac{9}{ \frac{2}{7^{\frac{1}{3}}}-1 } \right]+1 \right\}  } = 198 $. For $v\geq V_2'$ and  $1\leq x\leq \sqrt{\frac{v}{v-2}}\leq \sqrt{2}$, we have that  $0<t_1'=\frac{3}{v}, t_2'=\frac{3x^2}{v}<\frac{2}{7^{\frac{1}{3}}}-1$, and thus $\frac{7}{4}<\phi_2^{(3)}(\xi'_2),\phi_2^{(3)}(\xi'_3)<2$ for $\xi_2'\in (0,t_1'), \xi_3'\in (0,t_2')$.
In addition, one can check that for $v\geq V_2'$, we have  $0\leq L(v), 3L(v), 3L^2(v)\leq 1$.

Then for $v\geq V'_2$,  by $\bar{E}_0(1+m)\leq 0$, (\ref{proof-lem-3.5-s2-c}), (\ref{proof-lem-3.5-s1-a}) and $|\bar{R}_0|<\frac{5}{v^4}$,   we have
\begin{align*}
& \left| \int_0^{L(v)} e^{\bar{E_0}(1+m)} \cdot \frac{\bar{R_1}(v, 1+m)}{v}dm \right|
\le \int_0^{L(v)}  \left| \frac{\bar{R_1}(v, 1+m)}{v} \right| dm\\
&= \frac{1}{v} \int_0^{L(v)} \left| \frac{9}{4} \left[ (1+m)^4-1 \right] + \frac{v+1}{2} \left[ \frac{\phi_2 ^{(3)}(\xi'_2)}{6} \cdot \frac{27}{v}-\frac{\phi_2 ^{(3)}(\xi'_3)}{6} \cdot \frac{27 \cdot (1+m)^6}{v} \right] \right| \, dm
\\
&\le \frac{9}{4v} \int_0^{L(v)}\left[(4m+6m^2+4m^3+m^4) + \left| \frac{v+1}{v} \left[ \phi_2^{(3)}(\xi'_2)-\phi_2 ^{(3)}(\xi'_3)(1+m)^6 \right] \right|\right]dm\\
&\le \frac{9}{4v} \int_0^{L(v)}\left[(4m+2m+\frac{4}{9}m+\frac{1}{27}m) + \frac{4}{3}\left| \left[ \phi_2^{(3)}(\xi'_2) - \phi_2 ^{(3)}(\xi'_3) \right] - \phi_2 ^{(3)}(\xi'_3) \left[ (1+m)^6 - 1 \right] \right|\right]dm\\
&\le \frac{9}{4v} \int_0^{L(v)}\left[ \frac{175}{27}m + \frac{4}{3}\left| \phi_2^{(3)}(\xi'_2) - \phi_2 ^{(3)}(\xi'_3) \right| + \frac{4}{3} \cdot \left| \phi_2 ^{(3)}(\xi'_3) \left[ (1+m)^6 - 1 \right] \right| \right] dm\\
&< \frac{9}{4v} \int_0^{L(v)} \left[ \frac{175}{27}m + \frac{1}{3} + \frac{8}{3}\left( m^6+6m^5+15m^4+20m^3+15m^2+6m \right) \right] dm
\\
&\le \frac{9}{4v} \int_0^{L(v)}\left[\frac{175}{27}m + \frac{1}{3} + \frac{8}{3} \left( \frac{1}{243}m+\frac{2}{27}m+\frac{5}{9}m+\frac{20}{9}m+5m+6m \right)\right]dm\\
&< \frac{1}{v} \int_0^{L(v)} \left( \frac{3}{4} + 100m \right) dm\\
&= \frac{1}{v} \left( \frac{3}{4}L(v) + 50L^2(v) \right)\\
&= \frac{1}{v} \left[ \frac{3}{4}  \left( \frac{1}{v} + \frac{3}{2v^2} + \frac{5}{2v^3} +\bar{R}_0(v)  \right) + 50 \cdot \left( \frac{1}{v} + \frac{3}{2v^2} + \frac{5}{2v^3} +\bar{R}_0(v)  \right)^2 \right]
\\
&= \frac{3}{4v^2} + \frac{1}{v^3}  \left[ \frac{3}{4} \cdot \left( \frac{3}{2} + \frac{5}{2v} +v^2 \cdot \Bar{R}_0(v)  \right)  + 50 \cdot \left( 1 + \frac{3}{2v} + \frac{5}{2v^2} +v \cdot \bar{R}_0(v) \right)^2 \right]
\\
&< \frac{3}{4v^2} + \frac{1}{v^3}  \left[ \frac{3}{4} \cdot \left( \frac{3}{2} + \frac{1}{2} \right)  + 50 \cdot \left( 1 + 1 \right)^2  \right]
= \frac{3}{4v^2} + \frac{403}{2v^3}.
\end{align*}

For $v\geq V_2'$, by  the fact that $\bar{E}_0(1+m)\leq 0$, we have
\begin{align*}
& \left| \int_0^{L(v)} e^{\bar{E_0}(1+m)} \cdot \left[ \frac{\bar{R}_1(v, 1+m) \cdot \bar{E}_1(1+m) }{v^2} + \frac{e^{\xi_4'}}{6} v {\bar{z}}^3 \right] \,dm \right|
\\
&\le \int_0^{L(v)}\left[ \left| \frac{\bar{R}_1(v, 1+m) \cdot \bar{E}_1(1+m) }{v^2} \right| + \left| \frac{e^{\xi_4'}}{6} v {\bar{z}}^3 \right| \right]dm
\\
&<  \int_0^{L(v)}\left[\frac{d_1 \cdot \frac{27}{4} }{2v^2} + v  \left( \left| \frac{1}{v} \bar{E}_1(1+m) + \frac{1}{v^2} \bar{R}_1(v, 1+m) \right| \right)^3  \right]dm\\
&< \int_0^{L(v)}\left[ \frac{27d_1}{8v^2} + \frac{1}{v^2} \left( \frac{27}{4} + \frac{1}{4} \right)^3  \right]dm
< \int_0^{L(v)} \frac{4d_1 + 343}{v^2} dm\\
&= \frac{4d_1 + 343}{v^2} \cdot L(v)
= \frac{4d_1 + 343}{v^2} \left( \frac{1}{v} + \frac{3}{2v^2} + \frac{5}{2v^3} +\bar{R}_0(v) \right)
\\
&< \frac{4d_1 + 343}{v^3} \cdot \left( 1 + 1 \right)
= \frac{8d_1 + 686}{v^3},
\end{align*}
where we used the inequality $ \frac{3}{2v} + \frac{5}{2v^2} + v \cdot \bar{R}_0(v) \le 1 $ and $ \frac{\bar{R}_1(v, 1+m)}{v} < \frac{1}{4} $ since $|\bar{R}_0(v)|\le \frac{d_0}{v^4}$, $ \left| \bar{R}_1(v, 1+m) \right| \le d_1=\frac{171}{4} $, $v\ge V'_2=198$ and $d_0=5$.

Notice that
$$
\int_0^{L(v)} e^{\bar{E_0}(1+m)} \cdot \left[ \frac{{ \bar{E_1}(1+m)}^2}{2v} + \frac{{\bar{R_1}(v, 1+m)}^2}{2 v^3} \right] \,dm \ge 0.
$$
Then for all  \( v\ge V'_2 \), by (\ref{proof-lemma3.5-s6-a}) and the above discussions, we get
\begin{align}\label{proof-lemma3.5-s6-b}
\bar{A}_3
> -\frac{3}{4v^2} - \frac{403}{2v^3} - \frac{8d_1 + 686}{v^3}
= -\frac{3}{4v^2} - \frac{16d_1+1775}{2v^3}.
\end{align}

\textbf{Step 7.} Define
\begin{align*}
d_3&:= 80 + \left( d_0 + \frac{87}{2} \right) + 128 + 18 + \frac{16d_1+1775}{2} = 1648.
\end{align*}
By (\ref{proof-lemma3.5-s4-a}), (\ref{proof-lemma3.5-s4-c}), (\ref{proof-lem3.5-s5-a}) and (\ref{proof-lemma3.5-s6-b}), we get
\begin{align*}
G \left( v, \sqrt{3} \right)
&= 1 - \frac{1}{v^2} + \bar{R}_3(v) + \bar{R}_4(v) + \frac{3}{v^2} + \bar{R}_5(v) + \bar{R}_6(v) + \bar{A}_3
\\
&> 1 + \frac{2}{v^2}  -  \frac{3}{4v^2} -\frac{d_3}{v^3}
= 1 + \frac{5}{4v^2} -\frac{d_3}{v^3}.
\end{align*}

By (\ref{1.1}), we know
\(
 \max \left\{ V'_2, \ \frac{4}{5}d_3 \right\} = \frac{4}{5}d_3=1318.4=\bar{v}_0 \left( \sqrt{3} \right)
\).
Then for all \( v\ge \bar{v}_0 \left( \sqrt{3} \right) \),
\[
G \left( v, \sqrt{3} \right) > 1 + \frac{ 5v - 4
d_3  }{4v^3} \ge 1.
\]
The proof  is complete. \hfill\fbox

\vskip 0.3cm
\noindent{\large\bf Acknowledgements}

\noindent  This work was supported by the National Natural Science Foundation of China (No. 12171335).


\begin{thebibliography}{100}

\bibitem{BPR23} Bababesi, L., Pratelli, L.,  Rigo, P. On the Chv\'{a}tal-Janson conjecture, Statis. Probab. Lett.,  194 (2023) 109744.


\bibitem{DK22} Dvorak, V., Klein, O. Probability mass of Rademacher sums beyond one standard deviation, SIAM J. Discrete Math., 36 (2022) 2393-2410.



\bibitem{GTH24} Guo, Z.-Y., Tao, Z.-Y., Hu, Z.-C. A study on the negative binomial distribution motivated by Chv\'{a}tal's theroem, Statis. Probab. Lett., 207 (2024) 110037.

\bibitem{HP23} Hollom, L., Portier, J. Tight lower bounds for anti-concentration of Rademacher sums and Tomaszewski's counterpart problem. arXiv:2306.07811v1 (2023).

\bibitem{HLZZ24} Hu, Z.-C., Lu, P., Zhou, Q.-Q., Zhou, X.-W. The infimum values of the probability functions for some infinitely divisible distributions motivated by Chv\'{a}tal's theorem, J. of Math. (Wuhan), 44 (2024) 309-316.

\bibitem{HST24} Hu, Z.-C., Song, R., Tan, Y. On the anti-concentration functions of some familiar families of distributions, Math. Theory Appl., 44 (2024) 1-15.

\bibitem{Jan21} Janson, S. On the probability that a binomial variable is at most its expectation,  Statis. Probab. Lett., 171 (2021) 109020.

\bibitem{KK22} Keller, N., Klein, O. Proof of Tomaszewski's conjecture on randomly signed sums, Adv. Math., 407 (2022) 108558.


\bibitem{LHZ24} Li, C., Hu, Z.-C., Zhou., Q.-Q. A study on the Weibull and Pareto distributions motivated by Chv\'{a}tal's theorem, J. of Math. (Wuhan), 44 (2024) 195-202.

\bibitem{LXH23} Li, F.-B., Xu, K., Hu, Z.-C. A study on the Poisson, geometric and Pascal distributions motivated by Chv\'atal's conjecture, Statis. Probab. Lett.,  200 (2023) 109871.

\bibitem{Rai60} Rainville, E., D. Special Functions[M]. New York: The Macmillan Company, 1960.





\bibitem{Sun21} Sun, P.  Strictly unimodality of the probability that the binomial distribution is more than its expectation, Discrete Appl. Math.,  301 (2021) 1-5.

\bibitem{SHS24a} Sun, P., Hu,  Z.-C., Sun, W. The infimum values of two probability functions for the Gamma distribution, J. Ineq. Appl., 2024 (2024) 5.

\bibitem{SHS24b} Sun, P., Hu,  Z.-C., Sun, W. Variation comparison between infinitely divisible distributions and the normal distribution, Statistical Papers, 65 (2024) 4405-4429.

\bibitem{SHS23} Sun, P., Hu, Z.-C., Sun, W. Variation comparison between the
$F$-distribution and the normal distribution,  {arXiv}: 2305.13615 (2023).

\bibitem{Xie25} Xie, Z.-Y. The exploration of the Rayleigh distribution driven by three
conjectures, Undergraduate thesis, Sichuan University (2025).

\bibitem{ZHS25} Zhang, J., Hu, Z.-C., Sun, W. On the measure concentration of infinitely divisible distributions, Acta Math. Scientia, 45B (2025) 473-492.

\bibitem{Zhao25} Zhao, Y.-A. On the concentration functions of some familiar families of
distributions, Undergraduate thesis, Sichuan University (2025).

\bibitem{ZLH24} Zhou, Q.-Q., Lu, P., Hu, Z.-C. A study on the $F$-distribution motivated by Chv\'{a}tal's theorem, arXiv: 2409.09420v1 (2024).




\end{thebibliography}
\end{document}